\documentclass[11pt,reqno]{amsart}

\usepackage{theoremref}
\usepackage{mathrsfs}
\usepackage{hyperref}

\theoremstyle{plain}

\usepackage{kotex}
\usepackage{amsmath,amssymb,amscd,color}

\usepackage{tikz}
\usepackage{times}
\usepackage{color}
\usepackage{amsmath}
\usepackage{amssymb}
\usepackage{amsthm}
\usepackage{enumerate}
\usepackage{amsbsy}
\usepackage{amsfonts}
\topmargin -0.25in \textheight 8.5in \flushbottom
\setlength{\textwidth}{6.in}  
\setlength{\oddsidemargin}{.25in}  
\setlength{\evensidemargin}{.25in} 
\newtheorem{theo}{Theorem}[section]

\newtheorem{prop}[theo]{Proposition}

\newtheorem{lemm}[theo]{Lemma}
\newtheorem{rem}[theo]{Remark}

\newcommand{\al}{\alpha}

\newcommand{\Om}{\Omega}

\newcommand{\si}{\sigma}

\newcommand{\ep}{\epsilon }

\newcommand{\pa}{\partial}

\newcommand{\na}{\nabla}

\begin{document}
\baselineskip=18pt

\title[]{
Global well-posedness and time-decay estimates of the Navier-Stokes equations in exterior domains for critical data
}

\
\author[T.Chang]{Tongkeun Chang}
\address{Department of Mathematics, Yonsei University \\
Seoul, 136-701, South Korea}
\email{chang7357@yonsei.ac.kr}

\author[B.J.Jin]{Bum Ja Jin}
\address{Department of Mathematics, Mokpo National University, Muan-gun 534-729,  South Korea }
\email{bumjajin@mnu.ac.kr}

\thanks{Tongkeun Chang\\
email: chang7357@yonsei.ac.kr\\
Department of Mathematics, Yonsei University Seoul, 136-701, South Korea\\
Bum Ja Jin\\
email: bumjajin@mnu.ac.kr\\
Department of Mathematics, Mokpo National University, Muan-gun 534-729,  South Korea
}

\begin{abstract} 
  
This paper addresses the initial boundary value problem for the Navier–Stokes equations in a smooth exterior domain. We explore the global strong solvability for initial data that is small in a certain critical space larger than $L^n_\sigma$. In particular, this paper extends the results of M. Cannone, F. Planchon, and M. Schonbek \cite{CPS}, which were established for the half-space, to the exterior domain problem.

\noindent
 2000  {\em Mathematics Subject Classification:}  primary 76D05, secondary 35Q30. \\

\noindent {\it Keywords and phrases:
Navier–Stokes equations,  Exterior domains, Critical spaces,  Rough initial data, Global strong solvability }
\end{abstract}

\maketitle

\section{\bf Introduction}
\setcounter{equation}{0}
In this paper, we consider the initial-boundary value problem for Navier-Stokes equations in  exterior domain $\Om$ in ${\mathbb R}^n$, $n\geq 3$:
\begin{align}
\label{e1}
\left\{\begin{array}{l}\vspace{2mm}
\partial_t{u}-\Delta {u}+\nabla p=-(u\cdot \nabla)u(= -{\rm div} ({u} \otimes {u})) \,\, \mbox{ in } \,\, \Omega\times (0,\infty),\\
\vspace{2mm}
\mbox{\rm div}\, {u}=0 \,\, \mbox{ in }\,\, \Omega\times (0,\infty),\\
\vspace{2mm}
{u}|_{\pa\Om}=0\,\, \mbox{ for } \,\,  t>0,  
\\
\vspace{2mm}
 \lim_{|x|\rightarrow \infty}{u}(x,t)=  \,\, 0\mbox{ for }t>0,\\
{u}|_{t=0}={u}_0\,\, \mbox{ in } \,\, \Omega.
\end{array}\right.
\end{align}
 Here, ${u} = (u_1, u_2, \cdots, u_n)$ and $p$ denote unknown velocity and pressure, respectively, while  ${u}_0$ is a given initial velocity.

Let ${\mathbb P}$ be the continuous projection operator  onto solenoidal vector fields  
and   $A=-{\mathbb P}\Delta$. 
The classical system \eqref{e1} can be reduced into  an  integral equation
 \begin{equation}
 \label{e1new}
 u(t)=e^{-tA}u_0-\int^t_0e^{-tA(t-\tau)}{\mathbb P}(u\cdot \nabla u)(\tau) d\tau.
 \end{equation}
 A solution
 in $L^q(0,T;L^p_{\sigma}(\Om))$  
 satisfying \eqref{e1} in distributional sense is called a very weak solution and a solution
  in $C(0,T;X)$  
 satisfying \eqref{e1new}  is called a mild solution in $X$.
Here,  $X$ denotes some Banach space with ${\mathbb P}X\subseteq X$.
It is well known that the  very weak solutions, the mild solutions and the strong solutions    coming from   the same initial data coincide with each other    as far as they are in $C(0,T; L^p_\sigma), p\geq n$ (see \cite{Amann}). 
In this sense, we will not  make a distinction between  the types of solutions without further  mention.

The solvability of the Navier-Stokes equations in the whole space has been well known for the initial data in $L^p_\sigma({\mathbb R}^n)$ for any $p\geq  n$(see \cite{daveiga,calderon,fabes,kato} and references therein). The well-posedness for the data in $L^p_\sigma(\Om)$ have been studied also for  the other standard  domains such as half spaces, bounded domains and  exterior domains (see \cite{kozono0,ukai} for the half spaces, \cite{giga,miyakawa,wahl} for  bounded domains, \cite{iwashita} for the exterior domains).  In those literature, the local solvability has been studied for $u_0\in L^p_\sigma,p\geq n$, and the global solution has been obtained if $u_0$ is small in   $L^n_\sigma$.

A rough computation shows  that
\[
\|e^{-(t-\tau)A}{\mathbb P}(u\cdot \nabla )u(\tau)\|_{L^p}\leq c(t-\tau)^{-\frac{1}{2}-\frac{n}{2p}}\|u(\tau)\|_{L^p}^2 ,
\]
and the right hand side is integrable in $\tau$ variable for  $p>n$ but not for  $p=n$.  
In this sense, the space  $L^n_\sigma$ is critical. 
 The space $L^n_\sigma$ is  invariant under the scaling
$
\lambda v(\lambda x
)$.
In general,   the  scaling invariant spaces are  called critical spaces.

 If the data $u_0$ is in the critical space $L^n_\sigma$, then we could obtain a closed form
\[
\|u\|_{K}\leq c\big( \|u_0\|_{L^n}+\|u\|_{K}^2 \big),
\]
where $\|u\|_{K}=\sup_{0\leq t<T}\|u(t)\|_{L^n}+\sup_{0\leq t<T}t^{\frac{1}{2}-\frac{n}{2p}}\|u(t)\|_{L^p}$, $p>n$.
With this argument, T. Kato \cite{kato} showed the global-in-time solvability for the data small in $L^n_\sigma({\mathbb R}^n)$.

The same argument can be applicable for any domain as far as $L^r-L^p$ estimates for the Stokes semigroup operators have been obtained. Also, it can be extended to any critical space $X$ if we could derive the estimates
\[
\|e^{-tA}u_0\|_{K}\leq c\|u_0\|_{X}.
\]

M. Cannone \cite{cannone} showed the global solvability for the data small in the homogeneous Besov space $\dot{B}^{-1+\frac{n}{p}}_{p,\infty,\sigma}({\mathbb R}^n), p>n$.
Observe that $|x|^{-1}\in \dot{B}^{-1+\frac{n}{p}}_{p,\infty}({\mathbb R}^n), p>n$ but $|x|^{-1}\notin L^n({\mathbb R}^n)$.
(See also \cite{tataru} for the data in even the weaker space ${\rm BMO}^{-1}({\mathbb R}^n)$).

The same result could be expected for the domains with non-empty boundaries. However, the non-local properties of the Besov norm make it difficult to show 
\[
\|e^{-tA}u_0\|_{K}\leq c\|u_0\|_{\dot{B}^{-1+\frac{n}{p}}_{p,\infty,\si}(\Om)} \mbox{ when }\Om\neq {\mathbb R}^n.
\]
Fortunately, the Stokes flow in the half-space has an explicit solution representation consisting of the compositions of the heat semigroup operators, Riesz operators and their convolutions. This helps M. Cannone, F. Planchon and M. Schonbek \cite{CPS} to prove the global strong solvability for the data small in $\dot{B}^{-1+\frac{n}{p}}_{p,\infty,\sigma}({\mathbb R}^n_+)$, $p>n$. 

Poincar\'{e}'s inequality holds for the functions in bounded domains. This leads to an exponential decay estimate of the Stokes operator in a bounded domain and helps H. Amann \cite{Amann} to show the global strong solvability for the data small in little Nikolskii space ${n}^{-1+\frac{n}{p}}_{p,0,\sigma}(\Om), p>n$.
Here ${n}^{-1+\frac{n}{p}}_{p,0,\sigma}$ is little Nikolskii space (see \cite{Amann}) satisfying the property 
\[
L^n_\sigma(\Om)\subset {n}^{-1+\frac{n}{p}}_{p,0,\sigma}(\Om)\subset B^{-1+\frac{n}{p}}_{p,\infty,\sigma}(\Om), \,p>n.
\]
See also \cite{FS,FS1,FS2} for the data in $B^{-1+\frac{n}{p}}_{p,q,\sigma}(\Om), \, \frac{n}{p}+\frac{2}{q}\leq 1, \,n<p<\infty, \, 2<q\leq \infty$.  
%
%
%
In \cite{Amann}, the local strong solvability for $u_0\in {n}^{-1+\frac{n}{p}}_{p,0,\sigma} (\Om)$ has been proved for any standard domains, including exterior domains. However, the corresponding global solvability was established for all such domains except exterior domains. Similarly, M.-H. Ri, P. Zhang and Z. Zhang \cite{MZZ} proved local and global solvability for data in another critical space, $B^0_{n,\infty,\sigma}(\Om)$, but their global result also does not cover exterior domains. 

Unlike bounded domains---where homogeneous and inhomogeneous spaces coincide, offering significant analytical advantages---such benefits are absent in the exterior domain problem. Consequently, to the best of our knowledge, there has been no global-in-time solvability result for the Navier-Stokes equations in homogeneous Besov spaces on exterior domains.

Therefore, the primary aim of this paper is to overcome this difficulty and extend the global strong solvability results of M. Cannone, F. Planchon, and M. Schonbek \cite{CPS} to the exterior domain problem.

{\color{red}{
On the other hand, global solvability for exterior domains with rough data has been successfully established in other larger scaling-invariant spaces, most notably the weak Lebesgue spaces $L^{n,\infty}$ (or Lorentz spaces $\mathbb{L}(n,\infty)$) by H. Kozono and M. Yamazaki \cite{KozonoYamazaki1998}, M. Yamazaki \cite{Yamazaki2000}, and P. Maremonti \cite{maremonti2017}. Regarding this functional setting, it is important to clarify the relationship between our working space $\dot{b}^{-1+\frac{n}{p}}_{p,q}$ and the weak Lebesgue space $L^{n,\infty}$. We emphasize that these two spaces do not generally possess an inclusion relationship. While $L^{n,\infty}$ is well-suited for capturing singular functions like $|x|^{-1}$, such functions do not fall into $\dot{b}^{-1+\frac{n}{p}}_{p,q}$ due to the nature of the closure of $C^\infty_{0,\sigma}$. Conversely, for $p > n$, our space $\dot{b}^{-1+\frac{n}{p}}_{p,q}$ handles highly oscillatory distributions owing to its negative regularity, which $L^{n,\infty}$ fails to cover. In this sense, our approach provides a parallel and independent framework that treats a fundamentally different type of rough initial data for the exterior domain problem.
}
}

To establish our results in this functional setting, we utilize fundamental tools such as extension operators, cut-off functions, local Poincar\'{e} inequalities, and Sobolev and Besov embeddings \cite{danchin, Gau}. We explicitly establish in Section \ref{notation} (see Lemma \ref{lemma2.4}) that the Sobolev type embedding $\dot{H}^{1}_{p}(\Om) \subset L^{\frac{np}{n-p}}(\Om)$, $p<n$ still holds for exterior domains. Relying on this lemma, the subsequent propositions (Propositions \ref{Proposition2.7} and \ref{Proposition2.8}) deduce the crucial interpolation properties:
\[
(L^p (\Om), \dot{H}^{1}_{p} (\Om) )_{\al, 1}=\dot{B}^{\al}_{p,1} (\Om)\mbox{ and }(L^p (\Om), \dot{H}^{1}_{p,0} (\Om) )_{\al, 1}=\dot{B}^{\al}_{p,1,0} (\Om),\mbox{ for }p<n.
\]
These preliminary results serve as fundamental tools for our main theorems. Indeed, applying these interpolation properties to $L^p-L^r$ estimates of the Stokes operator, we can derive the Stokes estimate from $\dot{B}^\al_{p,1,0}(\Om)$ to $L^r$ for $p<n$. Making use of the duality argument, we then obtain the following estimate:
\[
\|e^{-tA}u_0\|_{K}\leq c\|u_0\|_{\dot{B}^{-\al}_{p,\infty,\si}(\Om)}, \quad p>\frac{n}{n-1}.
\]
See Section \ref{notation} for the definitions of the function spaces mentioned above and below.
The following is our estimates of the Stokes operator for the data in homogeneous Besov space. 
For $\alpha > 0$, we denote by $\dot{b}^{-\alpha}_{p,\infty,\sigma}(\Omega)$ the completion of $C^\infty_{0,\sigma}(\Omega)$ in $\dot{B}^{-\alpha}_{p,\infty,\sigma}(\Omega)$.

\begin{theo}
\label{theorem2}
Let $\Omega$ be a smooth exterior domain in $\mathbb{R}^n$, $n\geq 3$. 
Let $u_0 \in \dot{b}^{-\alpha}_{p,\infty,\sigma}(\Omega)$, where $\frac{n}{n-1} < p < \infty$ and $0 < \alpha < 1$.  
Then, it holds that 
\begin{align*}
 \|e^{-tA}u_0\|_{L^p_\sigma(\Omega)} &\leq ct^{-\frac{\alpha}{2}}\|u_0\|_{\dot{B}^{-\alpha}_{p,\infty,\sigma}(\Omega)},\\
 \|e^{-tA}u_0\|_{\dot{B}^{-\alpha}_{p,\infty,\sigma}(\Omega)} &\leq c\|u_0\|_{\dot{B}^{-\alpha}_{p,\infty,\sigma}(\Omega)}. 
\end{align*}
\end{theo}

%
%
%
%
The following estimates will be used for the bilinear operators $\int^t_0e^{-A(t-\tau)}{\mathbb P}(u\cdot \nabla v)(\tau) d\tau$. 

\begin{theo}
\label{bilinear1}
Let $\Omega$ be a smooth exterior domain in $\mathbb{R}^n$ with $n\geq 3$. 
Suppose that $0<\alpha<1$ and $\frac{n}{n-1} < r < p < \infty$. 
Then, for any tensor field $\mathcal{F}\in [L^r(\Omega)]^{n^2}$, the following estimates hold:
\begin{align*}
\|e^{-tA}{\mathbb P}\mbox{\rm div}{\mathcal F}\|_{L^p (\Omega) }
& \leq ct^{-\frac{1}{2}-\frac{n}{2}(\frac{1}{r}-\frac{1}{p})}\|{\mathcal F}\|_{L^r (\Omega) },\\
\| e^{-tA}{\mathbb P}\mbox{\rm div}{\mathcal F}\|_{\dot{B}^{-\alpha}_{p,\infty,\sigma} (\Omega) }
& \leq ct^{-\frac{1-\alpha}{2}-\frac{n}{2}(\frac{1}{r}-\frac{1}{p})}\|{\mathcal F}\|_{L^r (\Omega) }.
\end{align*}
\end{theo}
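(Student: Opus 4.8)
The plan is to deduce both bounds, by duality, from the known $L^{s}$-$L^{q}$ smoothing and gradient estimates for the Stokes semigroup on exterior domains, together with the interpolation identity $(L^{p'}(\Omega),\dot H^1_{p',0}(\Omega))_{\alpha,1}=\dot B^{\alpha}_{p',1,0}(\Omega)$ recalled in the introduction, which applies here because the hypothesis $\frac{n}{n-1}<r<p$ forces $1<p'<r'<n$. Everything rests on the self-adjointness of $e^{-tA}$ with respect to the $L^{p}$-$L^{p'}$ pairing and on the elementary fact that $\mathbb P\,\mathrm{div}$ is, up to sign, the adjoint of $\nabla$ acting on solenoidal fields.

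For the $L^p$ estimate I would test $e^{-tA}\mathbb P\,\mathrm{div}\,\mathcal F$ against $g\in C^\infty_{0,\sigma}(\Omega)$. Using $\mathbb P^*=\mathbb P$, $\mathbb P g=g$, the self-adjointness of the Stokes semigroup (first on $L^2_\sigma$, then extended by density to the $L^{p}$-$L^{p'}$ duality), and an integration by parts whose boundary term vanishes since $e^{-tA}g$ has zero trace on $\partial\Omega$, one gets
\[
\langle e^{-tA}\mathbb P\,\mathrm{div}\,\mathcal F,\ g\rangle=-\langle \mathcal F,\ \nabla e^{-tA}g\rangle .
\]
By Hölder's inequality and the exterior-domain gradient estimate $\|\nabla e^{-tA}g\|_{L^{r'}}\le c\,t^{-\frac12-\frac n2(\frac1{p'}-\frac1{r'})}\|g\|_{L^{p'}}$ (valid for $1<p'<r'<n$), the right-hand side is at most $c\,t^{-\frac12-\frac n2(\frac1r-\frac1p)}\|\mathcal F\|_{L^r}\|g\|_{L^{p'}}$; taking the supremum over such $g$, which are dense in $L^{p'}_\sigma(\Omega)$, gives the first inequality and simultaneously exhibits $e^{-tA}\mathbb P\,\mathrm{div}\,\mathcal F$ as a well-defined element of $L^p_\sigma(\Omega)$.

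For the Besov estimate I would use the same identity, now taking the supremum over solenoidal $g\in C^\infty_{0,\sigma}(\Omega)$ with $\|g\|_{\dot B^{\alpha}_{p',1,0}(\Omega)}\le1$ (duality between $\dot b^{\alpha}_{p',1,0}$ and $\dot B^{-\alpha}_{p,\infty,\sigma}$), so that it suffices to bound $\|\nabla e^{-tA}g\|_{L^{r'}}$ by $\|g\|_{\dot B^{\alpha}_{p',1,0}}$. I would interpolate the gradient estimate above with its $\dot H^1_{p',0}$-analogue
\[
\|\nabla e^{-tA}g\|_{L^{r'}}=\|A^{1/2}e^{-tA}g\|_{L^{r'}}\le c\,t^{-\frac n2(\frac1{p'}-\frac1{r'})}\|A^{1/2}g\|_{L^{p'}}\le c\,t^{-\frac n2(\frac1{p'}-\frac1{r'})}\|\nabla g\|_{L^{p'}},
\]
which follows from $A^{1/2}e^{-tA}=e^{-\frac t2A}A^{1/2}e^{-\frac t2A}$, the $L^{p'}$-$L^{r'}$ smoothing of $e^{-tA}$, and the identification $D(A^{1/2})\simeq\dot H^1_{p',0}(\Omega)$, legitimate for $p'<n$. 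Real interpolation with parameter $\alpha$ of these two bounds for the operator $g\mapsto\nabla e^{-tA}g$, via $(L^{p'},\dot H^1_{p',0})_{\alpha,1}=\dot B^{\alpha}_{p',1,0}$, then yields
\[
\|\nabla e^{-tA}g\|_{L^{r'}}\le c\,t^{-\frac{1-\alpha}2-\frac n2(\frac1r-\frac1p)}\|g\|_{\dot B^{\alpha}_{p',1,0}},
\]
and pairing with $\mathcal F\in L^r$ by Hölder gives the second inequality. (Alternatively, the second bound is immediate from the first applied at time $t+s$ and the semigroup characterization of $\dot B^{-\alpha}_{p,\infty,\sigma}(\Omega)$ underlying Theorem \ref{theorem2}, since $\sup_{s>0}s^{\alpha/2}(s+t)^{-\frac12-\frac n2(\frac1r-\frac1p)}\le c\,t^{-\frac{1-\alpha}2-\frac n2(\frac1r-\frac1p)}$.)

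I expect the main obstacle to be the gradient estimate for $e^{-tA}$ on an exterior domain in the needed exponent range: unlike for $\mathbb R^n$ or a half-space, such estimates are delicate in $L^q$ for $q\ge n$, and it is precisely the hypothesis $r>\frac n{n-1}$, i.e.\ $r'<n$, that keeps every exponent occurring on the dual side strictly below $n$, where the smoothing and gradient estimates, the identification $D(A^{1/2})\simeq\dot H^1_{p',0}$, and the Sobolev embedding $\dot H^1_{p',0}(\Omega)\subset L^{np'/(n-p')}(\Omega)$ are all available. The remaining points --- density of $C^\infty_{0,\sigma}$, the $\dot b^{\alpha}_{p',1,0}$-$\dot B^{-\alpha}_{p,\infty,\sigma}$ duality, and the justification of the integration by parts for general $\mathcal F\in L^r$ --- are routine.
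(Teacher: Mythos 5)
Your proposal is correct and follows essentially the same route as the paper: the duality identity $\langle e^{-tA}\mathbb P\,\mathrm{div}\,\mathcal F,\varphi\rangle=-\langle\mathcal F,\nabla e^{-tA}\varphi\rangle$, the exterior-domain gradient estimate on the dual side (where $r'<n$ makes it available), and, for the Besov bound, real interpolation between the $L^{p'}$ gradient estimate and the $\dot H^1_{p',0}$ one obtained from $A^{1/2}e^{-tA}=e^{-tA}A^{1/2}$ together with $D(A^{1/2})\simeq\dot H^1_{p',0}$ and $(L^{p'},\dot H^1_{p',0})_{\alpha,1}=\dot B^{\alpha}_{p',1,0}$ — this is exactly the paper's Lemma \ref{lemma.3.2}. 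The only cosmetic difference is that you fold that interpolation step directly into the main argument rather than isolating it as a separate lemma.
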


We use the contraction principle to construct the solution of the nonlinear problem \eqref{e1new}. 
Applying the estimates in Theorem \ref{theorem2} and Theorem \ref{bilinear1}, we prove the global solvability for small data in the critical space $\dot{b}^{-1+\frac{n}{p}}_{p,\infty,\sigma}(\Omega)$, which is larger than $L^n_\sigma(\Omega)$. 

Before stating our main results, let us introduce the weighted Bochner space. For a Banach space $X$, we denote by $L^q_\alpha(0,\infty;X)$ the weighted Bochner space equipped with the following norm:
\begin{align*}
\|f\|_{L^q_\alpha (0,\infty ;X)} &= \Big(\int_0^\infty  t^{q\alpha}\|f(t)\|^q_{X} dt \Big)^\frac{1}{q}, \quad 1 \leq q < \infty,\\
\|f\|_{L^\infty_\alpha (0,\infty ;X)} &= \sup_{0 < t < \infty} t^{\alpha}\|f(t)\|_{X}, \quad q=\infty.
\end{align*}
Our main results regarding the global solvability are as follows.

\begin{theo}
\label{theorem1}
Let $\Omega$ be a smooth exterior domain in $\mathbb{R}^n$ with $n\geq 3$. 
\begin{itemize}
 \item[(1)] Let $u_0 \in \dot{b}^{-1 +\frac{n}{p}}_{p,\infty,\sigma}(\Omega)$ for $n < p < \infty$. There is $\ep_0>0$ such that if $\|u_0\|_{\dot{B}^{-1 + \frac{n}{p}}_{p,\infty,\sigma}(\Omega)} < \ep_0$, then the equations \eqref{e1} has a unique solution $u$ in $L^\infty_{\frac{1}{2} -\frac{n}{2p}}(0, \infty ; L^p_\sigma(\Omega)) \cap L^\infty(0,\infty;\dot{B}^{-1+\frac{n}{p}}_{p,\infty,\sigma}(\Omega))$.  

 \item[(2)] Let $u_0 \in L^n_\sigma(\Omega)$. There is $\ep_0>0$ such that if $\|u_0\|_{\dot{B}^{-1 + \frac{n}{p_0}}_{p_0, \infty,\sigma}(\Omega)} < \ep_0$ for some $n<p_0<\infty$, then the equations \eqref{e1} has a unique strong solution $u$ in $L^\infty(0, \infty ; L^n_\sigma(\Omega)) \cap L^\infty_{\frac{1}{2} -\frac{n}{2p_0}}(0, \infty ; L^{p_0}(\Omega))$ with $t^{\frac{1}{2}}\nabla u\in L^\infty(0, \infty ; L^n(\Omega))$.  
\end{itemize}
\end{theo}

\begin{rem}
\begin{itemize}
\item[(i)] The space $\dot b^{-1 + \frac{n}{p}}_{p ,\infty,\si } (\Om)$, $p>n$ contains $L^n_\si$.

\item[(ii)] It is well known that a weak solution with ${u}\in L^\infty_{loc}(0,T;L^2_{loc}(\Om)), \ \nabla {u}\in L^2_{loc}(\Om\times (0,T))$ is smooth in the interior of the domain as far as ${u}\in L^q_{loc}(0,T;L^p_{loc}(\Om))$ with $\frac{n}{p}+\frac{2}{q}\leq 1,\  n<p\leq \infty$ (see \cite{ESS,serrin,struwe,takahashi} and the references therein). In this sense our solution is a strong one.

\item[(iii)] The space $\dot{b}^s_{p,\infty,\si}$ in our paper is comparable with the little Nikolskii space $n^s_{p,0,\si}$ in \cite{Amann}.

\item[(iv)]{\color{red}{ A distinctive aspect of our approach lies in the treatment of homogeneous functional spaces on exterior domains. Typically, working with such spaces introduces severe technical difficulties since their elements are defined modulo polynomials. For instance, overcoming the complexities of Besov and fractional spaces in unbounded or exterior domains has been a subject of intense recent study, as seen in the works on the $H^\infty$-calculus and bounded imaginary powers of the Stokes operator by Farwig and Tsuda \cite{FarwigTsuda_Linear, FarwigTsuda_Nonlinear}. However, in our present problem, by restricting our analysis to the regime $p<n$, we exploit the crucial property that elements of the homogeneous Bessel potential space $\dot{H}^1_p(\Om)$ naturally decay at infinity. This observation elegantly bypasses the complications associated with polynomial equivalence classes, enabling us to derive the required interpolation properties for exterior domains in a clean and transparent manner.
}}
\end{itemize}
\end{rem}

\subsection*{Notation}
For a vector-valued function $u = (u_1, \ldots, u_n)$ and a function space $X$ for scalar-valued functions, we write $u \in [X]^n$ to mean that each component $u_i \in X$. For simplicity, we sometimes write $u \in X$ when the distinction is clear from the context.
In Section \ref{notation}, we mainly deal with scalar-valued functions, so the notation $u \in X$ is used. In Section \ref{solenoidal} and thereafter, we focus on vector-valued functions, and the notation $u \in [X]^n$ is adopted.

\vspace{0.5cm} 

This paper is organized as follows. In Section \ref{notation}, we introduce the homogeneous function spaces and their known properties. We also show the interpolation properties of the homogeneous spaces in exterior domains, which are not found in the literature. In Section \ref{solenoidal}, we introduce the homogeneous solenoidal spaces and show their interpolation properties in exterior domains, which are required for our study. We also introduce some known estimates related to the Stokes operators. We provide the proofs of Theorem \ref{theorem2}, Theorem \ref{bilinear1}, and Theorem \ref{theorem1} in Sections \ref{auxilliaries}, \ref{section.theorem.1.4}, and \ref{external-force}--\ref{section.besov}, respectively, with Sections \ref{external-force} and \ref{section.besov} devoted to the proof of our main result, Theorem \ref{theorem1}.

\section{\bf Homogeneous  spaces and their properties}
\setcounter{equation}{0}
\label{notation}

In this section, we collect the definitions and properties of homogeneous and solenoidal function spaces that will be used throughout the paper. 
While most of these are classical (see \cite{Amann,BL,Tr,danchin,Gau}), we also establish several interpolation properties in exterior domains, which are not found in the literature.

Let $s\in \mathbb{R}$, $1\leq p,q\leq \infty$, and let $\Omega\subset \mathbb{R}^n$ be a domain. 
We denote by $H^s_p(\Omega)$ the usual (inhomogeneous) Bessel potential spaces and by $B^s_{p,q}(\Omega)$ the Besov spaces. 
The completions of $C^\infty_0(\Omega)$ with respect to these norms are denoted by $H^s_{p,0}(\Omega)$ and $B^s_{p,q,0}(\Omega)$, respectively. 
The homogeneous Sobolev and Besov spaces $\dot H^s_p(\mathbb{R}^n)$ and $\dot B^s_{p,q}(\mathbb{R}^n)$ are defined via Littlewood-Paley decompositions (see \cite{BL,danchin} for details). 
We adopt the modified definitions introduced by Bahouri, Chemin, and Danchin \cite{danchin}, which avoid quotienting by polynomials and naturally yield Banach spaces. 
For a general domain $\Omega$, the restriction and zero-extension operators are used to define $\dot H^s_p(\Omega)$ and $\dot B^s_{p,q}(\Omega)$, equipped with the following quotient norms:
\[
\|f\|_{X(\Omega)} := \inf\big\{ \|F\|_{X(\mathbb{R}^n)} : F|_\Omega = f \big\}, \quad 
X\in\{\dot H^s_p, \dot B^s_{p,q}\}.
\]
Furthermore, we define the corresponding spaces with zero boundary conditions as follows:
\[
\dot H^s_{p,0}(\Omega) := \overline{C^\infty_0(\Omega)}^{\dot H^s_p(\mathbb{R}^n)}, \qquad
\dot B^s_{p,q,0}(\Omega) := \overline{C^\infty_0(\Omega)}^{\dot B^s_{p,q}(\mathbb{R}^n)}.
\]

These spaces satisfy the standard embedding, duality, and interpolation properties (see Proposition~\ref{prop2.2}). 
For exterior domains, we establish additional characterizations in Lemma~\ref{lemma2.4}, whose detailed proofs are deferred to the Appendix.

We first recall the classical properties of the (modified) homogeneous spaces 
following \cite{danchin} for Besov spaces and \cite{Gau} for the Bessel potential spaces. 
These results will then be extended to general domains via restriction and extension operators.

Before stating these properties, let us introduce some necessary function spaces and notations. 
We define the subspace of Schwartz functions whose Fourier transforms avoid the origin as 
${\mathcal S}_0({\mathbb R}^n):=\{ f\in {\mathcal S}: \ 0\notin \mbox{\rm supp}({\mathcal F}(f))\}.$
Furthermore, we denote by $\dot{B}^s_{p,\infty,0}({\mathbb R}^n)$ the completion of ${\mathcal S}_0$ in $\dot{B}^s_{p,\infty}({\mathbb R}^n)$. 
Throughout this paper, $X'$ denotes the dual space of $X$, and $r'=\frac{r}{r-1}$ is the Hölder conjugate of $r$. 
The notations $[A,B]_\theta$ and $(A ,B)_{\theta,q}$ stand for the complex and real interpolation spaces between normed spaces $A$ and $B$, respectively. 
Finally, the notation $f \approx g$ means that $\frac{1}{c}f\leq g\leq cf$ for some generic constant $c>0$.

\begin{prop}
\label{prop2.2}
\begin{itemize}

\item[(i)] The (modified) homogeneous spaces are Banach spaces for $s\in {\mathbb R}$, $1 < p < \infty$, $1 \leq q < \infty$ with $[s<\frac{n}{p}]$ or $[s=\frac{n}{p}$ and $q=1]$.

\item[(ii)] For any $1 < p < \infty$, $1\leq q < \infty$, $s\in {\mathbb R}$, ${\mathcal S}_0$ is dense in $L^p({\mathbb R}^n)$, $\dot{H}^s_p({\mathbb R}^n)$, and $\dot{B}^s_{p,q}({\mathbb R}^n)$. 
Moreover, we have $\dot{B}^s_{p,\infty,0}({\mathbb R}^n)\subsetneq \dot{B}^s_{p,\infty}({\mathbb R}^n)$.

\item[(iii)] For $s\in {\mathbb R}$, $1 < p < \infty$, $1 \leq q < \infty$, $m\in {\mathbb N}$, and for all $u\in {\mathcal S}_0$,
\begin{align*}
\|u\|_{\dot{H}^{s}_p({\mathbb R}^n)} &\approx \|D ^m_x u\|_{\dot{H}^{s-m}_p({\mathbb R}^n)}\approx \sum_{k=1}^n\|\partial_{x_k}^mu\|_{\dot{H}^{s-m}_p({\mathbb R}^n)},\\
\|u\|_{\dot{B}^{s}_{p,q}({\mathbb R}^n)} &\approx \|D ^m_x u\|_{\dot{B}^{s-m}_{p,q}({\mathbb R}^n)}\approx \sum_{k=1}^n\|\partial_{x_k}^mu\|_{\dot{B}^{s-m}_{p,q}({\mathbb R}^n)}.
\end{align*}
By the density argument, the above norm equivalences hold for any $u\in \dot{H}^s_p({\mathbb R}^n)$ and $u\in \dot{B}^s_{p,q}({\mathbb R}^n)$. In particular, $\dot{H}^0_p({\mathbb R}^n)=L^p({\mathbb R}^n)$.

\item[(iv)] Let $p\in (1,\infty)$, $s\in (0,\frac{n}{p})$ and $r=\frac{np}{n-sp}$. Then, the following embeddings hold:
\begin{align*}
\dot{H}^s_p({\mathbb R}^n)\subset L^r({\mathbb R}^n),\quad 
L^p({\mathbb R}^n)\subset \dot{H}^{-s}_r({\mathbb R}^n),\\
\dot{B}^s_{p,q}({\mathbb R}^n)\subset L^r({\mathbb R}^n)\ \mbox{ for }q\in [1,r], \\ 
L^p({\mathbb R}^n)\subset \dot{B}^{-s}_{r,q}({\mathbb R}^n)\ \mbox{ for }q\in [r,\infty].
\end{align*}
In particular, $\dot{B}^0_{p,r}({\mathbb R}^n)\subset L^p({\mathbb R}^n)\subset \dot{B}^0_{p,s}({\mathbb R}^n)$ for $1\leq r\leq \min\{2,p\}\leq s\leq \infty$.

\item[(v)] Let $s\in {\mathbb R}$ and $1 < p < \infty$. Then, $\dot{H}^s_p({\mathbb R}^n)$ and $\dot{B}^s_{p,q}({\mathbb R}^n)$ are reflexive for $1 < q < \infty$ with $s<\frac{n}{p}$. Moreover,
\begin{align*}
(\dot{H}^{-s}_{p'}({\mathbb R}^n))' &= \dot{H}^s_p({\mathbb R}^n) \quad \mbox{ for } s<\frac{n}{p}, \\
(\dot{B}^{-s}_{p',q'}({\mathbb R}^n))' &= \dot{B}^s_{p,q}({\mathbb R}^n)\quad \mbox{ for } -\frac{n}{p'}<s<\frac{n}{p}, \,\, 1<q\leq \infty,\\
(\dot{B}^{-s}_{p',\infty,0}({\mathbb R}^n))' &= \dot{B}^s_{p,1}({\mathbb R}^n) .
\end{align*}

\item[(vi)] Let $0 < p_0, p_1 < \infty$, $1 \leq q_0, q_1 \leq \infty$, $s_0, s_1\in {\mathbb R}$, $s_0\neq s_1$. 
Let $(s,\frac{1}{p}, \frac{1}{q}):=(1-\theta)(s_0,\frac{1}{p_0},\frac{1}{q_0})+\theta (s_1,\frac{1}{p_1},\frac{1}{q_1})$ for some $\theta\in (0,1)$.
For $[s_i<\frac{n}{p_i}]$ or $[s_i=\frac{n}{p_i}$ and $q_i=1]$ ($i=0,1$), the following interpolation properties hold:
\[
[\dot{H}^{s_0}_{p_0}({\mathbb R}^n), \dot{H}^{s_1}_{p_1}({\mathbb R}^n)]_\theta=\dot{H}^{s}_{p}({\mathbb R}^n),\quad
(\dot{H}^{s_0}_{p_0}({\mathbb R}^n), \dot{H}^{s_1}_{p_1}({\mathbb R}^n))_{\theta,q}=\dot{B}^{s}_{p,q}({\mathbb R}^n).
\]

\end{itemize}
\end{prop}

{

Next we define the restriction spaces on domains

Let   $\Om$ be a domain in $ {\mathbb R}^n$. 
Let   $s\geq 0, \, 1 <  p < \infty, \,\, 1 \leq  q \leq \infty$.  
The spaces $\dot H^s_p (\Om)$ and    $ \dot B^s_{p,q}(\Om)$  are  defined by the restrictions of $\dot H^s_p ({\mathbb R}^n) $ and    $ \dot B^s_{p,q}({\mathbb R}^n)$ over $\Om$  normed with  
  \begin{align} \label{norminOm}
  \| f \|_{X(\Om)} : = \inf \{  \|F\|_{  X({\mathbb R}^n)} \,:\,  f \in  X({\mathbb R}^n), \,\,  F|_{\Om} =f\},
  \end{align}
  where $X({\mathbb R}^n) \in \{ \dot{H}^s_{p}({\mathbb R}^n), \,\,        \dot{B}^s_{p,q}({\mathbb R}^n) \}$ and  $X(\Om) \in \{\dot{ H}^s_{p}(\Om), \,\,      \dot{ B}^s_{p,q}(\Om)\}$.
  Here the curly braces simply list the possible choices of spaces.

  Define 
  \begin{align*}
  \dot H^s_{p,0} (\Om):&=\{u\in \dot{H}^s_p({\mathbb R}^n): \ {\rm supp} \,\, u\subset \bar{\Om}\},\\
   \dot B^s_{p,q,0}(\Om):&=\{u\in \dot{B}^s_p({\mathbb R}^n): \ {\rm supp}\,\, u\subset \bar{\Om}\}
   \end{align*}   normed with
  \[
  \|u\|_{ \dot H^s_{p,0} (\Om)}:=\|u\|_{\dot{H}^s_p({\mathbb R}^n)},\quad \|u\|_{ \dot B^s_{p,q,0} (\Om)}:=\|u\|_{\dot{B}^s_{p,q}({\mathbb R}^n)}.
  \]
It is trivial that  $\dot{H}^0_p(\Om) = \dot{H}^0_{p,0}(\Om)=L^p(\Om)$ and ${\mathcal S}_0(\overline{\Om})$ is  dense in $X(\Om)$, where ${\mathcal S}_0(\overline{\Om})$ is the restriction of ${\mathcal S}_0$ to $\Om$.
}

Define the spaces
\[
\dot{H}^{-s}_p(\Om):=(\dot{H}^s_{p',0}(\Om))',
\ \dot{B}^{-s}_{p,q}(\Om):=(\dot{B}^{s}_{p',q',0}(\Om))',\]
and
\[
\dot{H}^{-s}_{p,0}(\Om):=(\dot{H}^s_{p'}(\Om))', \quad  \dot{B}^{-s}_{p,q,0}(\Om):=(\dot{B}^{s}_{p',q'}(\Om))'.\]

By the definition of restrictions  the following embedding holds:
\begin{lemm}
\label{lemma2.1}
Let $\Om$ be a domain in ${\mathbb R}^n$. Let $ 1 \leq p \leq \infty, \,\,  s\in (0,\frac{n}{p}), \,\, r=\frac{np}{n-sp}$. 
\begin{align*}
&\dot{H}^{s}_p(\Om) \subset L^{r}(\Om), \quad
\dot{B}^{s}_{p,q}(\Om)\subset L^{r}(\Om))\ \mbox{ for }\  q\in [1,r],\\
&\dot{B}^0_{p,r}(\Om)\subset L^p(\Om)\subset \dot{B}^0_{p,s,0}(\Om)\ \mbox{ for }\ 1\leq r\leq \min\{2,p\}\leq s\leq \infty.
\end{align*}
\end{lemm}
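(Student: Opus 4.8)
The plan is to deduce each inclusion directly from its $\R$ counterpart in Proposition \ref{prop2.2}(iv), using nothing but the definition \eqref{norminOm} of the restriction norm and, for the last embedding, extension by zero together with the support characterisation of the spaces $\dot B^s_{p,q,0}(\Om)$. In particular the constant $c$ will always be the one coming from Proposition \ref{prop2.2}(iv), hence independent of $\Om$.

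For the Sobolev-type inclusions $\dot H^s_p(\Om)\subset L^r(\Om)$ and $\dot B^s_{p,q}(\Om)\subset L^r(\Om)$ with $q\in[1,r]$ I would argue as follows. Let $X(\R)\in\{\dot H^s_p(\R),\dot B^s_{p,q}(\R)\}$ and $f\in X(\Om)$. Given $\ep>0$, by \eqref{norminOm} choose $F\in X(\R)$ with $F|_\Om=f$ and $\|F\|_{X(\R)}\le\|f\|_{X(\Om)}+\ep$. Proposition \ref{prop2.2}(iv) gives $F\in L^r(\R)$ and $\|F\|_{L^r(\R)}\le c\|F\|_{X(\R)}$; restricting to $\Om$ yields $f=F|_\Om\in L^r(\Om)$ with $\|f\|_{L^r(\Om)}\le\|F\|_{L^r(\R)}\le c(\|f\|_{X(\Om)}+\ep)$, and letting $\ep\to0$ proves the claim. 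The same argument, now with the ``in particular'' part of Proposition \ref{prop2.2}(iv), gives $\dot B^0_{p,r}(\Om)\subset L^p(\Om)$ when $1\le r\le\min\{2,p\}$.

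For the last embedding $L^p(\Om)\subset\dot B^0_{p,s,0}(\Om)$ with $\min\{2,p\}\le s\le\infty$ a restriction argument is not available, so instead, given $f\in L^p(\Om)$, I would let $\ti f$ be its extension by zero to $\R$. Then $\ti f\in L^p(\R)$, $\|\ti f\|_{L^p(\R)}=\|f\|_{L^p(\Om)}$, and ${\rm supp}\,\ti f\subset\bar\Om$. By the ``in particular'' part of Proposition \ref{prop2.2}(iv) one has $L^p(\R)\subset\dot B^0_{p,s}(\R)$, so $\ti f\in\dot B^0_{p,s}(\R)$ with support in $\bar\Om$; by the definition of $\dot B^0_{p,s,0}(\Om)$ this means $\ti f\in\dot B^0_{p,s,0}(\Om)$ with $\|\ti f\|_{\dot B^0_{p,s,0}(\Om)}=\|\ti f\|_{\dot B^0_{p,s}(\R)}\le c\|f\|_{L^p(\Om)}$. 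Identifying $f$ with its zero extension $\ti f$ then finishes the proof.

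I do not expect any real obstacle: the lemma is a soft consequence of Proposition \ref{prop2.2}(iv). The only points needing (minor) care are that in the last inclusion one must pass through the zero extension rather than a restriction — which is legitimate because the target space is defined as a subspace of $\dot B^0_{p,s}(\R)$ cut out by a support condition, and $\ti f\in{\mathcal S}'_h$ since $L^p(\R)\subset{\mathcal S}'_h$ — and that one should keep track of the fact that all constants are inherited unchanged from the whole-space estimates.
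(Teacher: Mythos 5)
Your proof is correct and follows exactly the route the paper intends: the paper offers no written argument beyond ``by the definition of restrictions,'' and your restriction argument for the first inclusions together with the zero-extension argument for $L^p(\Om)\subset\dot B^0_{p,s,0}(\Om)$ (legitimate since that space is defined by a support condition inside $\dot B^0_{p,s}(\R)$) is precisely the intended justification via Proposition \ref{prop2.2}(iv).
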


By duality we obtain the following embeddings
\begin{lemm}
\label{lemma2.2}
Let $\Om$ be a domain in ${\mathbb R}^n$. 
Let $p\in [1,\infty], s\in (0,\frac{n}{p}),r=\frac{np}{n-sp}$. 
Then,
\begin{align*}
L^p(\Om)\subset \dot{H}^{-s}_{r,0}(\Om),\quad
L^p(\Om)\subset\dot{B}^{-s}_{r,q,0}(\Om), \quad  q\in [r,\infty].
\end{align*}
\end{lemm}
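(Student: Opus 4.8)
The plan is to deduce both embeddings from the Sobolev-type inclusions of Lemma \ref{lemma2.1} by a straightforward duality argument, using the definitions $\dot H^{-s}_{r,0}(\Om):=(\dot H^s_{r'}(\Om))'$ and $\dot B^{-s}_{r,q,0}(\Om):=(\dot B^s_{r',q'}(\Om))'$ together with the fact that every $f\in L^p(\Om)$ induces a bounded functional on $L^{p'}(\Om)$ via the pairing $g\mapsto\int_\Om fg\,dx$.

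First I would record the exponent bookkeeping. From $\frac1r=\frac1p-\frac sn$ one gets $\frac1{p'}=\frac1{r'}-\frac sn$, so that $(r',p')$ is exactly a Sobolev pair for a gain of $s$ derivatives, with $p'=\frac{nr'}{n-sr'}$, provided $s<\frac n{r'}$ (equivalently $p>1$). Hence Lemma \ref{lemma2.1}, applied with base exponent $r'$ and smoothness $s$, yields the continuous inclusions
\[
\dot H^s_{r'}(\Om)\subset L^{p'}(\Om),\qquad \dot B^s_{r',\tilde q}(\Om)\subset L^{p'}(\Om)\ \ \text{for}\ \ \tilde q\in[1,p'].
\]

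Then, given $f\in L^p(\Om)$, I would set $\ell_f(g):=\int_\Om fg\,dx$. For $g\in\dot H^s_{r'}(\Om)$, H\"older's inequality and the first inclusion give $|\ell_f(g)|\le\|f\|_{L^p(\Om)}\|g\|_{L^{p'}(\Om)}\le c\|f\|_{L^p(\Om)}\|g\|_{\dot H^s_{r'}(\Om)}$, so $\ell_f\in(\dot H^s_{r'}(\Om))'=\dot H^{-s}_{r,0}(\Om)$ with norm $\le c\|f\|_{L^p(\Om)}$; since $C^\infty_0(\Om)\subset\dot H^s_{r'}(\Om)$ densely and $\ell_f|_{C^\infty_0(\Om)}=0$ forces $f=0$, the map $f\mapsto\ell_f$ is the desired continuous injection $L^p(\Om)\subset\dot H^{-s}_{r,0}(\Om)$. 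The Besov statement follows the same way with $g\in\dot B^s_{r',q'}(\Om)$: the bound $|\ell_f(g)|\le c\|f\|_{L^p(\Om)}\|g\|_{\dot B^s_{r',q'}(\Om)}$ needs $\dot B^s_{r',q'}(\Om)\subset L^{p'}(\Om)$, which holds once $q'\in[1,p']$, i.e. $q\in[p,\infty]$; since $r=\frac{np}{n-sp}>p$, the range $q\in[r,\infty]$ claimed in the statement lies inside $[p,\infty]$, so this suffices.

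The point requiring care is the endpoint $p=1$ (then $p'=\infty$): there the Sobolev inclusion $\dot H^s_{r'}(\Om)\subset L^\infty(\Om)$ fails, so the $\dot H$-part should be read for $1<p<\infty$, while the Besov part must invoke the critical inclusion $\dot B^s_{r',1}(\Om)\subset L^\infty(\Om)$, which restricts $q$ to $\infty$. Apart from this borderline case I expect no obstacle; the only mildly tedious step is verifying that $C^\infty_0(\Om)$ genuinely lies in $\dot H^s_{r'}(\Om)$ and $\dot B^s_{r',q'}(\Om)$, which is standard for $0<s<n/r'$ and is needed only to get the injectivity of $f\mapsto\ell_f$.
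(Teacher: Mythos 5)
Your proof is correct and takes essentially the same approach as the paper: the paper gives no separate argument for this lemma, stating only that it follows ``by the duality argument'' from Lemma \ref{lemma2.1} and the definitions $\dot{H}^{-s}_{r,0}(\Om)=(\dot{H}^s_{r'}(\Om))'$, $\dot{B}^{-s}_{r,q,0}(\Om)=(\dot{B}^{s}_{r',q'}(\Om))'$, which is exactly the computation you carry out. Your observations about the endpoint $p=1$ and about the injectivity of $f\mapsto\ell_f$ are sensible refinements but do not alter the substance.
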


%
%
%

Next, having established the embeddings and duality properties (Lemmas~\ref{lemma2.1}–\ref{lemma2.2}), 
we now turn to the boundedness of restriction and extension operators, which will be crucial 
for extending the classical results to exterior domains.

Let    $R_\Om $ be the restriction operator for  the  function spaces to  $\Om$ defined by  $R_\Om F=F|_{\Om}$.   
By the definition of restriction,  it is obvious that $R_\Om $ is bounded operator as follows:
\begin{lemm}
\label{lemma2.3}
Let    $1< p<\infty$, $1\leq q\leq \infty$ and $s>0$. 
Then,

\[
R_\Omega \mbox{ is bounded from }
\{L^p(\mathbb{R}^n), \dot{H}^s_p(\mathbb{R}^n), \dot{B}^s_{p,q}(\mathbb{R}^n)\}
\mbox{ to }
\{L^p(\Omega), \dot{H}^s_p(\Omega), \dot{B}^s_{p,q}(\Omega)\}.
\]

%
%

\end{lemm}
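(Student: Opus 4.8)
The plan is to obtain the assertion directly from the definition \eqref{norminOm} of the norms on $\Om$; in fact the operator norm of $R_\Om$ will be at most $1$ in each case. I do not expect a genuine obstacle here — the only point that needs a word of care is that $R_\Om F = F|_\Om$ is a legitimate element of the target space, and this is automatic since the restriction of a tempered distribution in ${\mathcal S}'_h$ to the open set $\Om$ is always well defined as a distribution on $\Om$. Linearity of $R_\Om$ is obvious.

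First I would treat the Lebesgue case. If $F\in L^p(\R)$, then $\int_\Om |F|^p\,dx\le \int_\R |F|^p\,dx$, so $\|R_\Om F\|_{L^p(\Om)}\le\|F\|_{L^p(\R)}$.

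Next, for $X(\R)\in\{\dot H^s_p(\R),\,\dot B^s_{p,q}(\R)\}$ with $s>0$, fix $F\in X(\R)$ and put $f:=R_\Om F=F|_\Om$. Then $F$ is itself an extension of $f$ to all of $\R$ belonging to $X(\R)$, hence $F$ is an admissible competitor in the infimum defining $\|f\|_{X(\Om)}$ in \eqref{norminOm}. Therefore
\[
\|R_\Om F\|_{X(\Om)}=\|f\|_{X(\Om)}=\inf\{\|G\|_{X(\R)}\,:\,G\in X(\R),\ G|_\Om=f\}\le\|F\|_{X(\R)}.
\]
Since $F\in X(\R)$ was arbitrary, $R_\Om$ is bounded from $X(\R)$ to $X(\Om)$ with norm at most $1$, which together with the Lebesgue case gives the claim.

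The only step where one might pause is the measurability/well-definedness remark above, but this is standard; the substance of the lemma is purely that the restriction norm \eqref{norminOm} is built as an infimum over extensions, which makes the estimate tautological. Hence I expect the proof to be a couple of lines once the definitions are unwound.
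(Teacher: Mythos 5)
Your argument is correct and coincides with the paper's: the paper states the lemma as an immediate consequence of the definition \eqref{norminOm}, and your observation that $F$ itself is an admissible competitor in the infimum defining $\|F|_\Om\|_{X(\Om)}$ is precisely that reasoning, with the $L^p$ case handled trivially. Nothing further is needed.
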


Let ${\mathcal E}_\Om$ be the zero extension   operator  for the functions   in $\Om$.   
By the definition, it  is obvious that 
 ${\mathcal E}_\Om$ is bounded operator as follows: 
\begin{lemm}
\label{lemma2.5}
Let    $1< p<\infty$, $1\leq q\leq \infty$ and $s>0$. Then 
 ${\mathcal E}_\Om$ is bounded from $ X_0(\Om):=\{L^p(\Om) , \dot{H}^s_{p,0}(\Om), \dot{B}^s_{p,q,0}(\Om)\}$ to $ X({\mathbb R}^n):=\{L^p({\mathbb R}^n) , \dot{H}^s_{p}({\mathbb R}^n), \dot{B}^s_{p,q}({\mathbb R}^n)\}$ with
 \[
 \|{\mathcal E}_\Om  u\|_{X({\mathbb R}^n)}=\|u\|_{X_0(\Om)}\,\, \mbox{ for all } \,\, u\in X_0(\Om).
 \] 
\end{lemm}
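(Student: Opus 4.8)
The plan is to prove Lemma \ref{lemma2.5}, the boundedness and norm preservation of the zero extension operator ${\mathcal E}_\Om$ from $X_0(\Om)$ to $X(\R)$.

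The key observation is that this is almost immediate from the \emph{definitions} of the spaces $\dot{H}^s_{p,0}(\Om)$ and $\dot{B}^s_{p,q,0}(\Om)$ given just above the statement. Recall that by definition
\[
\dot H^s_{p,0}(\Om):=\{u\in \dot H^s_p(\R):\ {\rm supp}\, u\subset \overline{\Om}\},\qquad
\dot B^s_{p,q,0}(\Om):=\{u\in \dot B^s_{p,q}(\R):\ {\rm supp}\, u\subset \overline{\Om}\},
\]
with norms $\|u\|_{\dot H^s_{p,0}(\Om)}:=\|u\|_{\dot H^s_p(\R)}$ and $\|u\|_{\dot B^s_{p,q,0}(\Om)}:=\|u\|_{\dot B^s_{p,q}(\R)}$. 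So an element of $X_0(\Om)$ is literally a distribution on all of $\R$ that happens to be supported in $\overline{\Om}$. First I would observe that for such a distribution $u$, its restriction $u|_\Om$ determines $u$: since ${\rm supp}\, u\subset\overline{\Om}$ and $\Om$ is open, we have $u = {\mathcal E}_\Om(u|_\Om)$ as distributions on $\R$ (the two agree on $\Om$ and both vanish on the open set $\R\setminus\overline{\Om}$; one must check they also agree in a neighborhood of $\pa\Om$, which holds because a distribution supported in $\overline{\Om}$ acts trivially against test functions supported in the open complement, and the zero extension is built to have exactly this property). Hence ${\mathcal E}_\Om$ is simply the identification of $u|_\Om$ with $u$ itself, and the norm identity $\|{\mathcal E}_\Om u\|_{X(\R)} = \|u\|_{X_0(\Om)}$ is a tautology from the definition of the $X_0(\Om)$-norm. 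For the $L^p$ case it is completely elementary: $\|{\mathcal E}_\Om u\|_{L^p(\R)}^p = \int_{\R}|{\mathcal E}_\Om u|^p\,dx = \int_\Om |u|^p\,dx = \|u\|_{L^p(\Om)}^p$.

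Carrying this out in order: (1) state that for $u\in L^p(\Om)$ the zero extension is in $L^p(\R)$ with equal norm, by direct computation of the integral; (2) for $u\in \dot H^s_{p,0}(\Om)$ or $\dot B^s_{p,q,0}(\Om)$, recall that by the very definition such $u$ is the restriction to $\Om$ of a (unique, once we insist on support in $\overline\Om$) distribution $\tilde u\in \dot H^s_p(\R)$ (resp. $\dot B^s_{p,q}(\R)$) with ${\rm supp}\,\tilde u\subset\overline\Om$; (3) verify that ${\mathcal E}_\Om u = \tilde u$, using that $\tilde u$ vanishes on the open complement of $\overline{\Om}$ so its zero extension off $\Om$ reproduces it; (4) conclude $\|{\mathcal E}_\Om u\|_{X(\R)} = \|\tilde u\|_{X(\R)} = \|u\|_{X_0(\Om)}$ by the definition of the norm on $X_0(\Om)$.

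The only genuinely nontrivial point — and the one I would be most careful about — is step (3), the claim that $\tilde u = {\mathcal E}_\Om u$ rather than merely ${\rm supp}(\tilde u - {\mathcal E}_\Om u)\subset\pa\Om$. For reasonable (e.g. Lipschitz, or merely open with negligible boundary) $\Om$ this is fine: a distribution in $\dot H^s_p(\R)$ with $s\ge 0$ and $p$ finite is an $L^p_{loc}$ function (for $s=0$) or at least has no mass on the measure-zero set $\pa\Om$ when it is represented by a locally integrable function, so the extension is unambiguous; when $s>0$ one argues by the density of ${\mathcal S}_0(\overline\Om)$ noted above, approximating $u$ by functions for which the identity is obvious and passing to the limit in $X(\R)$. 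I would phrase the proof so that this density/approximation argument does the work, thereby also covering the Besov cases uniformly. Everything else is bookkeeping with the definitions already set up in this section.
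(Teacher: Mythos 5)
Your proposal is correct and takes essentially the same route as the paper: the paper offers no proof at all, stating only that the boundedness of ${\mathcal E}_\Om$ is ``obvious by the definition,'' and your argument supplies precisely the bookkeeping that is left implicit — namely that elements of $\dot H^s_{p,0}(\Om)$ and $\dot B^s_{p,q,0}(\Om)$ are by definition distributions on $\R$ supported in $\overline\Om$ normed by the $\R$-norm, so ${\mathcal E}_\Om$ acts as the identity on them and the norm identity is tautological. Your care about the possible discrepancy supported on $\pa\Om$ (resolved since these elements are locally integrable and $\pa\Om$ is a null set) is a reasonable refinement the paper does not bother to record.
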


The  properties  as in Proposition \ref{prop2.2}  also holds  for the half space and special Lipschitz domains 
(see \cite{danchin2, Gau} for the half spaces and \cite{Gau2} for special Lipschitz domains, where   Special Lipschitz doamins means a  domain whose boundary is a graph of  a Lipschitz function on ${\mathbb R}^{n-1}$ ).

The above properties are expected to hold also for exterior domains. 
However, we were not able to find such results in the literature. 
Since these properties will be required in the subsequent sections, 
we establish them in Lemma \ref{lemma2.4}, Proposition \ref{Proposition2.7}, and Proposition \ref{Proposition2.8} below, 
and provide their proofs in Appendix \ref{appendix1}. 
(We believe that more general theories, such as those in Proposition \ref{prop2.2}, 
should also extend to exterior domains, although here we treat only simple cases.)

%
We deliberately define $\dot{H}_{p}^{1}(\Omega)$ via restriction from the whole space to seamlessly inherit interpolation properties.
To clarify its relation to standard functional settings, Lemma \ref{lemma2.4} below shows that this restriction-based space coincides exactly with the classical spaces (e.g., Galdi \cite[Chapter II]{galdi} and \cite{CM}), ensuring that the standard Sobolev inequality naturally holds.

\begin{lemm}[Characterization of first-order spaces]
\label{lemma2.4}
Let $\Omega$ be an exterior domain with smooth boundaries.
\begin{itemize}
\item[(i)] Let $1<p<n$. Then, the space $\dot{H}_{p}^{1}(\Omega)$ can be equivalently characterized as the following set:
\begin{equation}
\dot{H}_{p}^{1}(\Omega) \simeq \Big\{f\in L^{\frac{np}{n-p}}(\Omega) \ \Big| \ \nabla f \in [L^{p}(\Omega)]^n\Big\} 
\end{equation}
with the equivalent norm $\|u\|_{\dot{H}_{p}^{1}(\Omega)}\approx\|\nabla u\|_{L^{p}(\Omega)}+\|u\|_{L^{\frac{np}{n-p}}(\Omega)}.$
 
\item[(ii)] Let $1\leq p<n$. Then $C^\infty_0(\Om)$ is dense in $\dot{H}^1_{p,0}(\Om)$.
Furthermore, the space $\dot{H}_{p,0}^{1}(\Omega)$ is characterized by:
$$\dot{H}_{p,0}^{1}(\Omega) \simeq \Big\{u\in L^{\frac{np}{n-p}}(\Omega) \ \Big| \ \nabla u\in [L^{p}(\Omega)]^n, \ u\big|_{\partial\Omega}=0\Big\}.$$
\end{itemize}
\end{lemm}

\begin{proof}
The rigorous proof of this lemma is provided in Appendix \ref{app:char}.
\end{proof}

\begin{prop}[Extension and restriction operators]
\label{Proposition2.7}
 Let $\Omega$ be an exterior domain and $1<p<n$.
 \begin{itemize}
 \item[(i)] There is an extension operator 
 $E_{\Omega}:(C_{0}^{\infty}(\Omega))^{\prime}\rightarrow(C_{0}^{\infty}(\mathbb{R}^{n}))^{\prime}$ such that $E_{\Omega}$ is bounded from 
 $\{L^{p}(\Omega), \dot{H}_{p}^{1}(\Omega)\}$ to $\{L^{p}(\mathbb{R}^{n}), \dot{H}_{p}^{1}(\mathbb{R}^{n})\}$.
 
 \item[(ii)] There is a bounded operator $\mathcal{R}_{\Omega} : \{L^{p}(\mathbb{R}^{n}), \dot{H}_{p}^{1}(\mathbb{R}^{n})\} \rightarrow \{L^{p}(\Omega), \dot{H}_{p,0}^{1}(\Omega)\}$.

%
%

\end{itemize}
\end{prop}
\begin{proof}
The proof is given in Appendix \ref{app:oper}.
\end{proof}

\begin{prop}[ Interpolation identities]
\label{Proposition2.8}

Let $\Omega$ be an exterior domain and $0<s<1$. The spaces defined in Section 2 satisfy the following interpolation properties:

\begin{itemize}
\item[(i)] For $1<p<n$ and $1\le q\le\infty$,
$$[L^{p}(\Omega), \dot{H}_{p}^{1}(\Omega)]_{s} = \dot{H}_{p}^{s}(\Omega), \quad (L^{p}(\Omega), \dot{H}_{p}^{1}(\Omega))_{s,q} = \dot{B}_{p,q}^{s}(\Omega).$$
\item[(ii)] For $1<p<n$ and $1\le q<\infty$,
$$[L^{p}(\Omega), \dot{H}_{p,0}^{1}(\Omega)]_{s} = \dot{H}_{p,0}^{s}(\Omega), \quad (L^{p}(\Omega), \dot{H}_{p,0}^{1}(\Omega))_{s,q} = \dot{B}_{p,q,0}^{s}(\Omega).$$
%
%
%
%
%
%
%
%
%
%
%
%
%
%
\end{itemize}
\end{prop}

\begin{proof}
The detailed proof can be found in Appendix \ref{app:interp}.
\end{proof}

 \section{\bf Homogeneous solenoidal function spaces and the Stokes operator  }

\label{solenoidal}

Set $C^\infty_{0,\sigma}(\Om)=\{u\in [C^\infty_0(\Om)]^n:\  \mbox{\rm div} \,  u=0 \mbox{ in }\Om\}$. 
For  $1\leq p,q\leq \infty$ and  $s\in {\mathbb R}$,  we set 
\begin{align*}
 \dot{H}^s_{p,\si}({\mathbb R}^n)&=\{u\in [\dot{ H}^s_{p} ({\mathbb R}^n)]^n: \mbox{\rm div} \, u=0\},\\
 \dot{B}^s_{p,q,\si}({\mathbb R}^n)&=\{u\in [\dot{ B}^s_{p,q} ({\mathbb R}^n)]^n: \mbox{\rm div}\, u=0\}.
\end{align*}
It is obvious that  $\dot{H}^{0}_{p,\si}({\mathbb R}^n)=L^p_\si({\mathbb R}^n):=\{u\in L^p({\mathbb R}^n) : \mbox{\rm div }u=0\}$.
Here and hereafter, we understand $\|u\|_{X}=\sum_{j=1}^n\|u_j\|_X$ for a vector valued distribution  $u=(u_1,\cdots, u_n)$ whose components are all  in the normed space $X$. 
The following is known in \cite{danchin2}.
\begin{prop}
\label{prop3.1}

Let $1<p<\infty,  1\leq q<\infty$ with  $[ s<\frac{n}{p}]\mbox{ or }[s=\frac{n}{p}\mbox{ and }q=1].$
\begin{itemize}

\item[(i)]
 $C^\infty_{0,\si}({\mathbb R}^n)$ is dense in $\dot{H}^s_{p,\si}({\mathbb R}^n)$ and $\dot{B}^s_{p,q,\si}({\mathbb R}^n)$.

\item[(ii)]
Let $m$ be a positive integer with $m\theta<\frac{n}{p}$. Then, 
\[
(L^p_\si({\mathbb R}^n), \dot{H}^m_{p,\si}({\mathbb R}^n))_{\theta,q}=\dot{B}^{m\theta}_{p,q,\si}({\mathbb R}^n).
\]

\end{itemize}

\end{prop}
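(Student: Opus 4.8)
The plan is to reduce both assertions to the scalar statements of Proposition \ref{prop2.2} by exploiting the Leray projection $\mathbb{P}$. First I would record its mapping properties: $\mathbb{P}=\bigl(\delta_{jk}+R_jR_k\bigr)_{j,k}$ is the matrix Fourier multiplier with symbol $I-\xi\otimes\xi/|\xi|^2$, which is smooth and homogeneous of degree $0$ on $\mathbb{R}^n\setminus\{0\}$; hence $\mathbb{P}$ commutes with the Littlewood--Paley projections, satisfies $\mathbb{P}^2=\mathbb{P}$, and is a bounded projection on $L^p(\mathbb{R}^n)$, $\dot{H}^s_p(\mathbb{R}^n)$ and $\dot{B}^s_{p,q}(\mathbb{R}^n)$ for $1<p<\infty$. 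Since $\mathbb{P}u=u$ for every divergence-free $u$, the solenoidal spaces are precisely the ranges $\dot{H}^s_{p,\sigma}(\mathbb{R}^n)=\mathbb{P}\dot{H}^s_p(\mathbb{R}^n)$ and $\dot{B}^s_{p,q,\sigma}(\mathbb{R}^n)=\mathbb{P}\dot{B}^s_{p,q}(\mathbb{R}^n)$, i.e.\ complemented subspaces of the full ones.

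For (i): by Proposition \ref{prop2.2}(ii), $\mathcal{S}_0$ is dense in $\dot{H}^s_p$ and in $\dot{B}^s_{p,q}$ (here $q<\infty$), so applying the bounded operator $\mathbb{P}$ gives that $\mathbb{P}\mathcal{S}_0$ is dense in $\dot{H}^s_{p,\sigma}$ and in $\dot{B}^s_{p,q,\sigma}$. Because the symbol of $\mathbb{P}$ is smooth on $\operatorname{supp}\widehat{f}$ when $f\in\mathcal{S}_0$, one has $\mathbb{P}\mathcal{S}_0\subset\mathcal{S}_0\cap\ker(\operatorname{div})$, so it remains only to approximate a Schwartz divergence-free field $v$, spectrally supported away from the origin, by elements of $C^\infty_{0,\sigma}(\mathbb{R}^n)$ in the homogeneous norm. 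I would do this by a cut-off with divergence correction: with $\chi\in C^\infty_0(B_2)$, $\chi\equiv1$ on $B_1$, $\chi_R=\chi(\cdot/R)$, set $v^R=\chi_R v-\mathcal{B}_R\bigl(v\cdot\nabla\chi_R\bigr)$, where $\mathcal{B}_R$ is the Bogovskii operator on $\{R<|x|<2R\}$ (legitimate since $\operatorname{div}(\chi_R v)=v\cdot\nabla\chi_R$ is supported there and has zero mean); equivalently one may use an antisymmetric Schwartz potential $w=(w_{jk})$, $w_{jk}=\partial_j\Delta^{-1}v_k-\partial_k\Delta^{-1}v_j$, and take $v^R_j=\sum_k\partial_k(\chi_R w_{jk})$, which is automatically divergence-free. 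In either case $v^R\in C^\infty_{0,\sigma}(\mathbb{R}^n)$, and one checks $v^R\to v$ by estimating the tail $(1-\chi_R)v$ and the correction term separately, interpolating between $L^p$ and $\dot{H}^N_p$, using the rapid decay of $v$ for the correction and the fact that $\widehat{v}$ vanishes near $0$ to kill the low-frequency contribution of the tail.

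For (ii): I would invoke the retraction/projection principle for the real interpolation functor. Since $\mathbb{P}$ is a bounded projection on both $L^p$ and $\dot{H}^m_p$, the couple $(L^p_\sigma,\dot{H}^m_{p,\sigma})=(\mathbb{P}L^p,\mathbb{P}\dot{H}^m_p)$ is a complemented subcouple of $(L^p,\dot{H}^m_p)$: comparing $K$-functionals (one inequality from $\mathbb{P}X_i\subset X_i$, the other by applying $\mathbb{P}$ to an arbitrary decomposition $u=u_0+u_1$ and using $\mathbb{P}u=u$) gives $K(t,u;L^p_\sigma,\dot{H}^m_{p,\sigma})\approx K(t,u;L^p,\dot{H}^m_p)$ for $u\in L^p_\sigma+\dot{H}^m_{p,\sigma}$, whence
\[
(L^p_\sigma,\dot{H}^m_{p,\sigma})_{\theta,q}=\mathbb{P}\bigl((L^p,\dot{H}^m_p)_{\theta,q}\bigr).
\]
By Proposition \ref{prop2.2}(vi) (with $s_0=0$, $s_1=m$, $p_0=p_1=p$) the right-hand space equals $\mathbb{P}\dot{B}^{m\theta}_{p,q}=\dot{B}^{m\theta}_{p,q,\sigma}$, which is the claim.

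The step I expect to be the main obstacle is the passage to compactly supported fields in (i): the multiplier bounds and projection identities for $\mathbb{P}$ are routine, and (ii) is then a soft interpolation argument, but controlling the homogeneous $\dot{H}^s_p$/$\dot{B}^s_{p,q}$ norm of the tail $(1-\chi_R)v$ and of the Bogovskii (or potential) correction requires a genuine low-frequency estimate that uses $v\in\mathcal{S}_0$ rather than merely $v$ Schwartz. A minor caveat in (ii) is that Proposition \ref{prop2.2}(vi) is stated under $s_1=m<n/p$, so for the range $n/p\le m$ (still allowed by $m\theta<n/p$) one should instead use the corresponding identity for the $\mathcal{S}'_h$-realization; in the applications of this paper one always has $m=1<n/p$, so this does not arise.
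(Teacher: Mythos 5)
The paper does not prove Proposition \ref{prop3.1} at all: it is quoted as known from \cite{danchin2}, so there is no internal argument to compare yours against. Judged on its own, your proof is correct and follows the standard route (indeed the one used in the cited literature): the Leray projector $\mathbb{P}$ is a Mikhlin-type Fourier multiplier, hence a bounded projection of $L^p$, $\dot H^s_p$ and $\dot B^s_{p,q}$ onto their solenoidal subspaces, and both the density and the real-interpolation identity then transfer from the scalar statements of Proposition \ref{prop2.2} by the retraction/complemented-subcouple principle, with the $K$-functional comparison exactly as you describe. The two points that genuinely need care are the ones you flag. First, in (i) the only nontrivial step is passing from $\mathbb{P}\mathcal{S}_0$ to $C^\infty_{0,\sigma}$: the cut-off with Bogovskii (or antisymmetric-potential) divergence correction works, but when $s\le 0$ the low-frequency part of the homogeneous norm of the tail $(1-\chi_R)v$ and of the correction is not controlled by $L^p$ alone and must be absorbed through a Bernstein/embedding estimate $L^r\subset \dot B^s_{p,q}$ with $r<p$, which is available because these terms decay rapidly; this closes the gap you identify. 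Second, in (ii) the hypothesis $m\theta<\frac{n}{p}$ does not force $m<\frac{n}{p}$, which is what Proposition \ref{prop2.2}(vi) requires for the endpoint space $\dot H^m_p$ to be well behaved in the $\mathcal{S}'_h$ realization; your caveat is warranted, and since the paper only ever invokes the case $m=1$ with $p<n$, nothing is lost in its applications.
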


 Let   $\Om$ be an exterior domain and  $s>0$.
  We define $ \dot{H}^s_{p,\si}(\Om)$ and $ \dot{B}^s_{p,q,\si}(\Om)$ by the restriction of  $\dot{H}^s_{p,\si}({\mathbb R}^n)$ and $ \dot{B}^s_{p,q,\si}({\mathbb R}^n)$, respectively.  The norms of restriction spaces  are defined similarly as  \eqref{norminOm}.
It is obvious that $C^\infty_{0,\si} (\overline{\Om})$ is dense in $\dot{H}^s_{p,\si}(\Om)$ and $\dot{B}^s_{p,q,\si}(\Om)$ for $1 < p< \infty, \,\, [ s<\frac{n}{p}]\mbox{ or }[s=\frac{n}{p}\mbox{ and }q=1],  \,  1\leq q<\infty$. 

 $ \dot{H}^s_{p,0,\si}(\Om)$ and $ \dot{B}^s_{p,q,0,\si}(\Om)$ are defined by 
\begin{align*}
\dot{H}^s_{p,0,\si}(\Om) & = \{ f \in [\dot H^s_{p,0} (\Om)]^n\, : \,\mbox{\rm div }u=0\mbox{ in }{\Om}\},\\
\dot{B}^s_{p,q,0,\si}(\Om) & = \{ f \in [\dot B^s_{p,q,0} (\Om)]^n\, : \,\mbox{\rm div }u=0\mbox{ in }{\Om}\}
\end{align*}
 normed with
\[
  \|u\|_{ \dot H^s_{p,0,\si} (\Om)}:=\|u\|_{\dot{H}^s_{p,0}(\Om)},\quad \|u\|_{ \dot B^s_{p,q,0,\si} (\Om)}:=\|u\|_{\dot{B}^s_{p,q,0}(\Om)}.
  \]

We define 
 \begin{align*}
 \dot H^{-s}_{p,\si}(\Om):= ( \dot H^{s}_{p',0,\si}(\Om))',\quad    \dot H^{-s}_{p,q,0,\si}(\Om):= ( \dot B^{s}_{p',q',\si}(\Om))',\\
  \dot B^{-s}_{p,q,\si}(\Om):= ( \dot B^{s}_{p',q',0,\si}(\Om))',\quad    \dot B^{-s}_{p,q,0,\si}(\Om):= ( \dot B^{s}_{p',q',\si}(\Om))'.
  \end{align*}
Let $L^p_\si(\Om):=\{u\in [L^p(\Om)]^n: \mbox{\rm div }u=0\mbox{ in }\Om, \ u\cdot \nu=0\mbox{ on }\pa \Om\}$.
Here,  $\nu$ is the unit outward normal vector at the point of $\pa \Om$. According to (ii)  of Lemma \ref{lemma2.4},  it is obvious that
\begin{align}
\label{eq3.3}
\dot{H}^1_{p,0,\si}=\{u\in [L^{\frac{np}{n-p}}(\Om)]^n: \ \nabla u\in [L^p(\Om)]^{n^2}, \mbox{\rm div }u=0\mbox{ in }\Om, \ u|_{\pa \Om}=0\},\  1<p<n.
\end{align}

We understand  the restriction operator $R_\Om $   for a vector valued distribution $f=(f_1,\cdots, f_n)$ by 
  \[
  R_{\Om}f=(R_\Om f_1,\cdots, R_\Om f_n).  
   \]
   Then,   it is trivial that  
$R_{\Om}$ is bounded from    $\{L^p_\si({\mathbb R}^n) , \dot{H}^1_{p,\si}({\mathbb R}^n)\}$ to 
   $\{\dot H^0_{p,\si} (\Om), \dot{H}^1_{p,\si}(\Om)\}$.

We  also understand  the zero extension operator ${\mathcal E}_\Om $ for a vector valued distribution $f=(f_1,\cdots, f_n)$  by 
  \begin{align}\label{zeroextension}
  {\mathcal E}_{\Om}f=({\mathcal E}_\Om f_1,\cdots, {\mathcal E}_\Om f_n). 
   \end{align}
   Then,   it is trivial that  
${\mathcal E}_{\Om}$ is bounded from    $\{L^p_\si(\Om) , \dot{H}^1_{p,0,\si}(\Om)\}$ to 
   $\{L^p_\si({\mathbb R}^n) , \dot{H}^1_{p,\si}({\mathbb R}^n)\}$.
Here we use the same notation $\mathcal E_\Omega$ as in Section~\ref{notation}, now applied componentwise to vector-valued functions.

Modifying the proof of the  interpolation property in  Proposition \ref{Proposition2.8},  the following interpolation property also holds.
\begin{lemm}
\label{lemma3.1}
Let $\Om$ be an exterior domain.  
Then,
\begin{itemize}
\item[(i)]
$C^\infty_{0,\si} (\Om)$ is dense in $L^p_\si(\Om)$ for $1<p<\infty$ and dense in $\dot{H}^1_{p,0,\si}(\Om)$ for $1<p<n.  $

\item[(ii)]
There is extension operator $E_{\Om,\si}: (C_{0,\si}^\infty(\Om))'\rightarrow (C^\infty_{0,\si}({\mathbb R}^n))'$ so that
$E_{\Om,\si}$ is bounded from $\{ \dot H^0_{p, \si} (\Om), \dot{H}^1_{p,\si}(\Om)\}$ to $\{L^p_\si({\mathbb R}^n), \dot{H}^1_{p,\si}({\mathbb R}^n)\}$ for $1<p<n$.

\item[(iii)]

Let $1<p<n, 1\leq q\leq \infty$ and $0<s<1$.
Then,  the following interpolation properties  hold:
\[
[\dot H^0_{p,\si} (\Om), \dot{H}^{1}_{p,\si}(\Om)]_s=\dot{H}^{s}_{p,\si}(\Om),\
(\dot H^0_{p,\si} (\Om), \dot{H}^{1}_{p,\si}(\Om))_{s,q}=\dot{B}^{s}_{p,q,\si}(\Om).\]

\item[(iv)]Let $1<p<n$.
There is bounded operator ${\mathcal R}_{\Om,\si}$ from $\{L^p_\si({\mathbb R}^n), \dot{H}^1_{p,\si}({\mathbb R}^n)\}$ to $\{L^p_\si(\Om), \dot{H}^1_{p,0,\si}(\Om)\}$.

\item[(v)]
Let $1<p<n, 1\leq q<\infty, 0<s<1 $. 
Then, 
\[
[L^p_\si(\Om), \dot{H}^{1}_{p,0,\si}(\Om)]_s=\dot{H}^{s}_{p,0,\si}(\Om),\
(L^p_\si(\Om), \dot{H}^{1}_{p,0,\si}(\Om))_{s,q}=\dot{B}^{s}_{p,q,0,\si}(\Om).
\]
\end{itemize}
\end{lemm}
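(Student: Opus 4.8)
The plan is to reduce each of (i)--(vi) to its non-solenoidal counterpart in Lemma~\ref{lemma2.4} and to the whole-space solenoidal theory, the only new ingredient being the Helmholtz (Leray) projections. On $\R$ the projection $\mathbb P=I-\na\De^{-1}\operatorname{div}$ is a homogeneous degree-zero Fourier multiplier, hence bounded and idempotent on $[L^p(\R)]^n$, $[\dot H^s_p(\R)]^n$ and $[\dot B^s_{p,q}(\R)]^n$ for all $s\in{\mathbb R}$, $1<p<\infty$, with range the solenoidal subspace, and $\mathbb P\varphi=\varphi$ for $\varphi\in C^\infty_{0,\si}(\R)$; on a bounded or exterior domain $D$ the Helmholtz projection $\mathbb P_D$ is a bounded projection of $[L^p(D)]^n$ onto $L^p_\si(D)$ for $1<p<\infty$. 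I will use the retraction--coretraction principle: whenever $j:\bar B\to\bar A$ and $P:\bar A\to\bar B$ are bounded maps of couples with $Pj=\mathrm{id}$, every interpolation space of $\bar B$ is the $P$-image of the corresponding one of $\bar A$, with equivalent norm. Since $\dot H^s_{p,\si}(\Om)$, $\dot B^s_{p,q,\si}(\Om)$ are by definition the restrictions $R_\Om\dot H^s_{p,\si}(\R)$, $R_\Om\dot B^s_{p,q,\si}(\R)$, it suffices to identify the interpolation spaces of the $\R$-couples and to exhibit the right retraction--coretraction pairs. For the $\R$-couple $(L^p_\si(\R),\dot H^1_{p,\si}(\R))$ one has $[\,\cdot\,,\,\cdot\,]_s=\dot H^s_{p,\si}(\R)$ and $(\,\cdot\,,\,\cdot\,)_{s,q}=\dot B^s_{p,q,\si}(\R)$ ($1<p<n$, $0<s<1$, $1\le q\le\infty$), obtained from $[L^p(\R),\dot H^1_p(\R)]_s=\dot H^s_p(\R)$ and $(L^p(\R),\dot H^1_p(\R))_{s,q}=\dot B^s_{p,q}(\R)$ of Proposition~\ref{prop2.2}(vi) (also Proposition~\ref{prop3.1}(ii)) by applying the projection $\mathbb P$.

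For (ii)--(iii) I would set $E_{\Om,\si}:=\mathbb P\circ E_\Om$ with $E_\Om$ from Lemma~\ref{lemma2.4}(iii). Boundedness $\{\dot H^0_{p,\si}(\Om),\dot H^1_{p,\si}(\Om)\}\to\{L^p_\si(\R),\dot H^1_{p,\si}(\R)\}$ is immediate, and it is an extension in the required distributional sense because for $\varphi\in C^\infty_{0,\si}(\Om)$ one has $\langle\mathbb PE_\Om u,\varphi\rangle=\langle E_\Om u,\mathbb P\varphi\rangle=\langle E_\Om u,\varphi\rangle=\langle u,\varphi\rangle$. Then (iii) follows from the retraction--coretraction principle applied to $(L^p_\si(\R),\dot H^1_{p,\si}(\R))$ with coretraction $E_{\Om,\si}$ and retraction the restriction $R_\Om$ (bounded by Lemma~\ref{lemma2.3}, divergence-free preserving, with $R_\Om E_{\Om,\si}=\mathrm{id}$ since $E_{\Om,\si}$ extends).

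For (iv)--(v) I would correct the non-solenoidal retraction $\mathcal R_\Om$ of Lemma~\ref{lemma2.4}(v) near the compact boundary. Fix $R$ with $\pa\Om\subset B_{R/2}$, put $\Om_R:=\Om\cap B_{2R}$, and take $\mathcal R_\Om$ (as built in the Appendix) to equal the identity outside $B_R$; let $\mathbb B_{\Om_R}$ denote the Bogovskii operator on $\Om_R$ extended by zero, bounded from $L^p(\Om_R)$ into $[\dot H^1_{p,0}(\Om)]^n$ and, in its negative-order version, from divergences of $[L^p]^n$-fields supported in $\bar\Om_R$ into $[L^p(\Om)]^n$ on the respective mean-zero subspaces. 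Define
\[
\mathcal R_{\Om,\si}:=\mathbb P_\Om\,\big(I-\mathbb B_{\Om_R}\circ\operatorname{div}\big)\,\mathcal R_\Om .
\]
For $v\in\dot H^1_{p,\si}(\R)$: $\mathcal R_\Om v\in[\dot H^1_{p,0}(\Om)]^n$ and $\operatorname{div}\mathcal R_\Om v\in L^p$ is supported in $\Om_R$ (since $\operatorname{div}v=0$ off $B_R$) and has zero mean (divergence theorem, using that $\mathcal R_\Om v$ vanishes on $\pa\Om$ and $\operatorname{div}v=0$); hence $(I-\mathbb B_{\Om_R}\operatorname{div})\mathcal R_\Om v$ is solenoidal with vanishing trace, i.e.\ already lies in $\dot H^1_{p,0,\si}(\Om)\subset L^p_\si(\Om)$, so $\mathbb P_\Om$ acts on it as the identity. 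For $v\in L^p_\si(\R)$ the same computation holds with $\operatorname{div}\mathcal R_\Om v$ read in the negative-order space, and $\mathbb P_\Om$ then supplies the normal boundary condition, so $\mathcal R_{\Om,\si}v\in L^p_\si(\Om)$. Finally, for $u\in\dot H^1_{p,0,\si}(\Om)$ (resp.\ $u\in L^p_\si(\Om)$) the operator $\mathcal R_\Om$ returns $u$, $\operatorname{div}u=0$ kills the Bogovskii term, and $\mathbb P_\Om$ fixes $u$, so $\mathcal R_{\Om,\si}\,{\mathcal E}_\Om=\mathrm{id}$; thus $\mathcal R_{\Om,\si}$ is a retraction with coretraction ${\mathcal E}_\Om$, and (v) follows from the retraction--coretraction principle exactly as (iii).

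For (i), density of $C^\infty_{0,\si}(\Om)$ in $L^p_\si(\Om)$, $1<p<\infty$, is classical; for $\dot H^1_{p,0,\si}(\Om)$, $1<p<n$, I would argue in the standard way: multiply $u$ by the rescaled cutoff $\eta(\cdot/\rho)$ and remove the divergence $\na\eta(\cdot/\rho)\!\cdot\!u$ by the Bogovskii operator on the shell $\{\rho<|x|<2\rho\}$, using that for the homogeneous norm $\|\na(\cdot)\|_{L^p}$ this Bogovskii operator has a bound independent of $\rho$ by scaling, while the Sobolev embedding $\dot H^1_{p,0}(\Om)\subset L^{\frac{np}{n-p}}(\Om)$ forces $\|\na\eta(\cdot/\rho)\!\cdot\!u\|_{L^p}\to0$; each truncation has bounded support, hence lies in $\{w\in[\dot H^1_{p,0}(\Om\cap B_{2\rho})]^n:\operatorname{div}w=0\}$, on which density of $C^\infty_{0,\si}$ is the classical bounded-domain fact, and a diagonal argument concludes. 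The complex-interpolation statements in (iii) and (v) are obtained by the same retraction--coretraction argument, using Proposition~\ref{prop2.2}(vi) and $\mathbb P$. I expect the real difficulty to sit in (iv): arranging $\mathcal R_{\Om,\si}$ to be bounded \emph{simultaneously} at the $L^p$ and the $\dot H^1$ endpoint is precisely what forces the negative-order Bogovskii operator at the $L^p$ endpoint and the bookkeeping of normal traces needed to insert $\mathbb P_\Om$ without destroying the boundary condition defining $\dot H^1_{p,0,\si}(\Om)$; once this is in place the remaining parts are formal consequences of the retraction--coretraction formalism.
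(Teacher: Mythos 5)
Your proposal is correct and follows essentially the same route as the paper's Appendix B: the extension $E_{\Om,\si}=\mathbb P\circ E_\Om$ with the Leray projection on $\R$, a Bogovskii-corrected restriction operator for (iv), the retraction--coretraction principle for the interpolation identities, and cutoff-plus-Bogovskii for density. The only (harmless) variations are that you localize the Bogovskii correction to a shell with scaling-invariant bounds and insert $\mathbb P_\Om$ explicitly before arguing it acts as the identity, whereas the paper uses the Bogovskii operator on all of $\Om$ and writes the correction directly as $f-\mathcal B_\Om(\operatorname{div}f)$.
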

\begin{proof}
The proof is given in Appendix \ref{appendix2}.
\end{proof}

Having established the fundamental properties of homogeneous solenoidal spaces in exterior domains, 
we are now in a position to investigate the Stokes operator. 
This will provide the analytic framework required for the proof of our main theorem in the next section.

 \begin{rem} 
  By our  definition of  $\dot{B}^{-s}_{p,q,0,\si}(\Om)$  for the exterior domain,  
 it is obvious that   $
   L^n_\si (\Om) \subset \dot B^{-1 +\frac{n}p}_{p,\infty,0,\si  } (\Om), \,\, n < p < \infty$
    with $ \| f\|_{\dot B^{-1 +\frac{n}p}_{p,\infty,0,\si} (\Om)} \leq c \| f\|_{L^n _\si(\Om)}$.

\end{rem}

Let $D(A^\al)$ be the domain of the fractional Stokes operator $A^\al$ in exterior domain, and let  ${D}^{\al}_{p}$ be  the completion of $D(A^{\al})$ with respect to the norm $\|A^{\al}\cdot\|_{L^p(\Om)}$.
W. Borchers and T. Miyakawa \cite{BM1} showed that  $\dot{H}^1_p$  is equivalent to  $D^{\frac{1}{2}}_p$, which reads as follows: 
\begin{prop}[ 
 \cite{BM1}]
\label{prop.grad}
Let $\Om$ be a smooth exterior domain and let $u\in D(A^{\frac{1}{2}})$. 
Then, 
\begin{align*}
 &{\rm (i)}\,\,  \|A^{\frac{1}{2}}u\|_{L^p(\Om)}\leq c\|\nabla u\|_{L^p(\Om)}, \quad  1<p<\infty,\\
&{\rm (ii)} \,\,\|\nabla u\|_{L^p(\Om)}\approx \|A^{\frac{1}{2}}u\|_{L^p(\Om)}, \quad  1<p<n,\\
 &{\rm (iii)} \,\,  {D}^{\frac{1}{2}}_{p}=\dot{H}^{1}_{p,0,\sigma} (\Om), \quad   1 <p<n.
 \end{align*}
\end{prop}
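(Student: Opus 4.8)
The plan is to deduce the proposition from the $L^p$-theory of the Stokes operator on exterior domains, treating as known the following facts: the Helmholtz projection $\mathbb{P}$ is bounded on $[L^p(\Om)]^n$ for $1<p<\infty$; $-A$ generates a bounded analytic semigroup on $L^p_\si(\Om)$ with $\|(\la+A)^{-1}\|_{p\to p}\le c|\la|^{-1}$ on a sector; one has the smoothing estimates $\|\na e^{-tA}f\|_{L^p}\le ct^{-\frac12}\|f\|_{L^p}$ for $0<t\le1$, $1<p<\infty$, and $\|\na e^{-tA}f\|_{L^q}\le ct^{-\frac12-\frac n2(\frac1p-\frac1q)}\|f\|_{L^p}$ for $1<p\le q\le n$ (see \cite{iwashita}); and $A$ admits bounded imaginary powers, $\|A^{is}\|_{p\to p}\le Ce^{\ep|s|}$. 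The algebraic backbone is the identity
\begin{align*}
A^{\frac12}u \;=\; A^{-\frac12}Au \;=\; -\,A^{-\frac12}\mathbb{P}\,\mathrm{div}(\na u), \qquad u\in D(A),
\end{align*}
together with its companion $\na u = (\na A^{-\frac12})\,A^{\frac12}u$, valid for $u\in D(A^{\frac12})$.

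I would first dispose of $p=2$, which is immediate. For $u\in C^\infty_{0,\si}(\Om)\subset D(A)$, integration by parts and the self-adjointness of $\mathbb{P}$ on $L^2$ (with $\mathbb{P}u=u$) give
\begin{align*}
\|A^{\frac12}u\|_{L^2}^2 = (Au,u) = (-\mathbb{P}\Delta u,u) = (-\Delta u,u) = \|\na u\|_{L^2}^2,
\end{align*}
using $\mathrm{div}\,u=0$ and $u|_{\pa\Om}=0$; so (i)--(iii) hold at $p=2$. For general $p$ the two identities above reduce the matter to two operator-boundedness statements: (a) $A^{-\frac12}\mathbb{P}\,\mathrm{div}$ extends to a bounded map $[L^p(\Om)]^{n^2}\to L^p_\si(\Om)$ for $1<p<\infty$, and (b) $\na A^{-\frac12}$ extends to a bounded map $L^p_\si(\Om)\to[L^p(\Om)]^n$ for $1<p<n$. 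On the level of orders these are zero-order operators (divergence loses a derivative, $A^{-\frac12}$ restores it), but they are scale-invariant; in particular they cannot be obtained by summing absolute values of semigroup or resolvent bounds, since the relevant time integrals $\int_0^\infty t^{-1}\,dt$ diverge --- this divergence is exactly the signature of a critical estimate.

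To prove (a) and (b) I would localize. Fix $R$ with $\pa\Om\subset B_{R/2}$ and a cutoff $\chi\in C^\infty_0(B_{2R})$ with $\chi\equiv1$ on $B_R$; decompose a solenoidal field as $u=\chi u+(1-\chi)u$ and restore the divergence of each piece by a Bogovskii-type operator supported in the annulus $B_{2R}\setminus B_R$. The near-boundary part then reduces to the corresponding statement on a bounded smooth domain (classical), and the far-field part to the whole space, where $A^{-\frac12}\mathbb{P}\,\mathrm{div}$ is a Mikhlin multiplier and $\na A^{-\frac12}$ a combination of Riesz transforms, both bounded on $L^p$ for all $1<p<\infty$; the commutator terms produced by $\na\chi$ are of lower order and compactly supported, and are absorbed using the $L^p$--$L^q$ smoothing above. (Alternatively one invokes the operator-valued multiplier / $H^\infty$-calculus available from the resolvent and imaginary-power bounds.) The restriction $p<n$ in (b) is genuine and enters only at the large scale: in an exterior domain the $t^{-1/2}$ decay of $\|\na e^{-tA}f\|_{L^p}$ for large $t$, which is what makes $\na A^{-\frac12}$ bounded on $L^p$, is available only for $p\le n$. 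Granting (a) and (b): $\|A^{\frac12}u\|_{L^p}=\|A^{-\frac12}\mathbb{P}\,\mathrm{div}(\na u)\|_{L^p}\le c\|\na u\|_{L^p}$ for $u\in D(A)$, which gives (i) after density; and for $1<p<n$, $\|\na u\|_{L^p}=\|(\na A^{-\frac12})A^{\frac12}u\|_{L^p}\le c\|A^{\frac12}u\|_{L^p}$, which with (i) gives (ii).

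For (iii), note that by (i)--(ii) the seminorms $\|A^{\frac12}\cdot\|_{L^p}$ and $\|\na\cdot\|_{L^p}$ are equivalent on $D(A^{\frac12})$ when $1<p<n$, while by Lemma \ref{lemma2.4}(i)--(ii) and the Sobolev inequality $\|v\|_{L^{np/(n-p)}(\Om)}\le c\|\na v\|_{L^p(\Om)}$ one has $\|\na v\|_{L^p}\approx\|v\|_{\dot H^1_{p,0}(\Om)}$ on $\dot H^1_{p,0,\si}(\Om)$, which is a Banach space in which $C^\infty_{0,\si}(\Om)$ is dense (Lemma \ref{lemma3.1}(i), cf. \eqref{eq3.3}). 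Since $C^\infty_{0,\si}(\Om)\subset D(A^{\frac12})$, a sequence Cauchy in one norm is Cauchy in the other, and using (ii) to identify limits in $L^{np/(n-p)}(\Om)$ one concludes that $D^{\frac12}_p$ --- the completion of $D(A^{\frac12})$ in $\|A^{\frac12}\cdot\|_{L^p}$ --- and $\dot H^1_{p,0,\si}(\Om)$ coincide with equivalent norms. The main obstacle throughout is establishing (a) and (b): the $L^p$-boundedness of the scale-invariant operators $A^{-\frac12}\mathbb{P}\,\mathrm{div}$ and $\na A^{-\frac12}$ on the exterior domain, a critical estimate that is immune to naive resolvent/semigroup integration and that, in the reverse direction, forces the restriction $p<n$.
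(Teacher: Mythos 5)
The paper does not prove Proposition \ref{prop.grad} at all: it is imported verbatim from Borchers--Miyakawa \cite{BM1} (see also \cite{GH1}), so there is no in-paper argument to compare yours against. Judged on its own terms, your write-up is a correct \emph{reduction}, not a proof. The $p=2$ computation, the algebraic identities $A^{1/2}u=-A^{-1/2}\mathbb{P}\,\mathrm{div}(\nabla u)$ and $\nabla u=(\nabla A^{-1/2})A^{1/2}u$, and the deduction of (iii) from (i)--(ii) via density of $C^\infty_{0,\si}(\Om)$ in both completions are all fine. But your claims (a) and (b) \emph{are} the proposition, and the argument you offer for them does not go through. The operator $A^{-1/2}$ is defined by functional calculus on the whole exterior domain; unlike the resolvent or the semigroup it has no local representation, so writing $u=\chi u+(1-\chi)u$ does not decompose $A^{-1/2}\mathbb{P}\,\mathrm{div}$ or $\nabla A^{-1/2}$ into a bounded-domain piece, a whole-space piece, and a ``lower-order, compactly supported'' commutator: the commutator $[\chi,A^{-1/2}]$ is itself a nonlocal operator of the same critical order, and controlling it is exactly as hard as the original estimate. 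You correctly observe that naive integration of the semigroup bounds produces the divergent integral $\int_0^\infty t^{-1}\,dt$, but the localization you propose as the substitute is not a workable route around that divergence.

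The actual proof in \cite{BM1} takes a different path, which you mention only parenthetically: one first establishes bounded imaginary powers of $A$ (Giga--Sohr \cite{GH1}), which yields the complex-interpolation identification $D(A^{1/2})=[L^p_\si(\Om),D(A)]_{1/2}$ with equivalent graph norms; one then localizes at the level of the \emph{resolvent equation} (where cutoff commutators really are lower order) to prove $D(A)\hookrightarrow H^2_p$-type elliptic estimates, interpolates to get the inhomogeneous bound $\|\nabla u\|_{L^p}\leq c(\|u\|_{L^p}+\|A^{1/2}u\|_{L^p})$, and finally removes the lower-order term by a Sobolev/Hardy-type argument at spatial infinity --- this last step is where the restriction $p<n$ genuinely enters, not the large-time decay of $\|\nabla e^{-tA}f\|_{L^p}$ as you suggest. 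A further unaddressed point: your (a) is asserted for all $1<p<\infty$, but duality against (b) only yields it for $p>\frac{n}{n-1}$ (since (b) is restricted to exponents below $n$), so the range $1<p\leq\frac{n}{n-1}$ of part (i) requires a separate argument that your sketch does not supply.
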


 It is known that   $A$ generates a bounded analytic semigroup  $e^{-At}$  in $L^q_\si$.
In \cite{BM1,iwashita, maremonti-solonnikov} the $L^q-L^r$ time decay estimates are given. For the later use, we introduce the estimates for  the case  $n\geq 3$.  
\begin{prop}[\cite{ iwashita}]
\label{LqLr_stokes}
 Let $f \in L^r_\sigma(\Om)$. 
Then,   it holds that  for  $1\leq r< p\leq \infty$ or $1<r=p\leq \infty$, 
\begin{align*}
\|e^{-At}f\|_{L^p (\Omega)}\leq ct^{-\frac{n}{2}(\frac{1}{r}-\frac{1}{p})}
\|f\|_{L^r(\Omega)}, \quad    t>0,
\end{align*}
and  for  $1\leq r< p\leq n$ or $1<r=p\leq n$, 
\begin{align*}
\|\nabla e^{-At}f\|_{L^p(\Omega)}\leq 
ct^{-\frac{1}{2}-\frac{n}{2}(\frac{1}{r}-\frac{1}{p})}\|f\|_{L^r(\Omega)},\quad t>0.
\end{align*}

\end{prop}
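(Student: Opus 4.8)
I would prove this by the classical two-regime scheme for exterior-domain Stokes decay (the statement is due to Iwashita \cite{iwashita}): treat $0<t\le1$ and $t\ge1$ by different mechanisms.

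\emph{Short time, $0<t\le1$.} The plan is to start from the fact that $A$ generates a bounded analytic semigroup on every $L^p_\si(\Om)$, $1<p<\infty$. This gives $\|e^{-At}f\|_{L^p}\le c\|f\|_{L^p}$ directly, and, combining the analyticity bound $\|A^{\frac12}e^{-At}\|_{L^p\to L^p}\le ct^{-\frac12}$ with Proposition \ref{prop.grad}(ii), which makes $\|\na\cdot\|_{L^p}$ equivalent to $\|A^{\frac12}\cdot\|_{L^p}$ exactly for $1<p<n$, also $\|\na e^{-At}f\|_{L^p}\le ct^{-\frac12}\|f\|_{L^p}$ for $1<p<n$. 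To promote these to genuine $L^r$--$L^p$ smoothing I would localize: fix a bounded smooth domain $\Om_0$ containing $\pa\Om$ and cut-off functions subordinate to the covering $\{\Om_0,\ \R\setminus\overline{B}\}$ of $\Om$ for a large ball $B$, decompose $e^{-At}f$ accordingly, correct the divergence produced by the cut-offs by a right inverse of $\mbox{\rm div}$ supported in an annulus (a Bogovskii-type operator), and so reduce to the whole-space Stokes semigroup --- equivalently, on solenoidal data, the heat semigroup --- for which Gaussian $L^r$--$L^p$ and gradient smoothing are immediate from the explicit kernel. The annular commutator terms carry one extra power of smoothing and are absorbed by a Duhamel iteration, yielding the stated bounds for $0<t\le1$.

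\emph{Long time, $t\ge1$.} This is the substantive part, and I would base it on the resolvent. The plan is to represent the semigroup by the Dunford integral $e^{-At}=\frac{1}{2\pi i}\int_\Ga e^{\la t}(\la+A)^{-1}\,d\la$ over a contour $\Ga$ in the resolvent set surrounding $\si(-A)\subset(-\infty,0]$, and then to deform $\Ga$ toward the origin, so that the large-time behaviour is dictated by the structure of $(\la+A)^{-1}$ near $\la=0$. The key input --- and the place where the geometry of the obstacle enters --- is Iwashita's low-frequency expansion of the exterior Stokes resolvent: write $(\la+A)^{-1}$ as the whole-space resolvent plus a boundary-layer correction, and analyse the correction through a reduced boundary integral equation whose $\la\to0$ asymptotics can be extracted. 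The analytic part of the resulting expansion contributes nothing to $e^{-At}$ for $t>0$ (its inverse Laplace transform is supported at $t=0$), while the leading singular part --- whose coefficients map $L^r$ boundedly into $L^p$, and whose gradients do so for $p<n$ --- produces, after inverse Laplace transform and optimisation of the contour, exactly the algebraic rates $t^{-\frac{n}{2}(\frac{1}{r}-\frac{1}{p})}$ and $t^{-\frac{1}{2}-\frac{n}{2}(\frac{1}{r}-\frac{1}{p})}$ for $t\ge1$. The restriction $p\le n$ in the gradient estimate is forced here: it is the range in which Proposition \ref{prop.grad}(ii), equivalently the needed bound on $\na(\la+A)^{-1}$, holds. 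Patching the two regimes gives the proposition for $1<r<p<\infty$; the diagonal cases $r=p$ reduce to the uniform bounds $\|e^{-At}\|_{L^p\to L^p}\le c$ and $\|\na e^{-At}\|_{L^p\to L^p}\le ct^{-\frac12}$ $(p\le n)$. For the endpoints $r=1$ and $p=\infty$ I would split $e^{-At}=e^{-A\frac t2}e^{-A\frac t2}$ and combine the $L^r$--$L^q$ smoothing just obtained on one factor with an $L^q$--$L^\infty$ (resp.\ $L^1$--$L^q$) bound on the other, the latter obtained by the same short-time localization and long-time contour argument, now entering or leaving $L^\infty$ (resp.\ $L^1$) via the whole-space kernel and Sobolev embedding.

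The hard part will be the long-time decay --- the low-frequency ($\la\to0$) analysis of the Stokes resolvent in the exterior domain. Unlike the whole space (scaling), the half space (an explicit solution formula) or bounded domains (Poincar\'e and a spectral gap), the exterior domain offers no shortcut, and the precise algebraic decay rate is governed entirely by how the exterior Stokes resolvent degenerates at the bottom of the spectrum; everything else is a perturbation off the known whole-space estimates together with routine semigroup bookkeeping.
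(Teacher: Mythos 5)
The paper does not prove this proposition: it is imported verbatim from Iwashita \cite{iwashita} (see also \cite{BM1,maremonti-solonnikov}), and is used throughout as a black box. So there is no in-paper argument to compare yours against; the relevant comparison is with the cited literature, and there your outline is a faithful reconstruction of the known strategy --- short-time bounds from analyticity of the semigroup plus a cut-off/Bogovskii reduction to the whole-space problem, long-time decay from the Dunford representation and the low-frequency expansion of the exterior Stokes resolvent, with the restriction $p\le n$ in the gradient estimate entering exactly where you place it (the equivalence $\|\na u\|_{L^p}\approx\|A^{1/2}u\|_{L^p}$ and the corresponding resolvent gradient bound fail for $p>n$ in exterior domains, where the gradient decay rate is known to saturate). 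Be aware, though, that what you have written is a roadmap rather than a proof: the decisive step --- deriving the $\la\to 0$ asymptotics of $(\la+A)^{-1}$ via the boundary-layer/integral-equation analysis and verifying that the singular coefficients map $L^r$ into $L^p$ with the claimed rates --- is named but not carried out, and it is precisely this step that occupies the bulk of Iwashita's paper. As a justification for quoting the proposition, your sketch is adequate; as a self-contained proof, it is not.
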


\section{\bf  Proof of Theorem \ref{theorem2} }\label{auxilliaries}
\setcounter{equation}{0}
 
 We are now ready to establish Theorem~\ref{theorem2}, using the estimates obtained above. 
For this purpose, we first record the following auxiliary lemma, which will be used in the proof.
 \begin{lemm}
 \label{lemma.3.1}
 Let $\varphi_0\in C^\infty_{0,\si}(\Om)$,  $0<\al<1$, $1<p\leq n$ and $1\leq q< \infty$.   Then,  for $ 1 < r \leq p$ or $1= r < p$.
     \[
\|  e^{-tA}\varphi_0\|_{\dot B^{\al}_{p,q,0,\si} (\Om )}   \leq 
\left\{\begin{array}{l}\vspace{2mm}
c t^{-\frac{\al}2 -\frac{n}{2}(\frac{1}{r}-\frac{1}{p})} \|\varphi_0\|_{L^{r} (\Om)},\\
 ct^{-\frac{n}{2}(\frac{1}{r}-\frac{1}{p})}\|\varphi_0\|_{\dot{B}^\al_{r,q, 0,\si}(\Om)}, \quad p \neq n.
\end{array}\right.
\]

\end{lemm}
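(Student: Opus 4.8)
The plan is to deduce Lemma \ref{lemma.3.1} from the $L^r$--$L^p$ decay estimates of the Stokes semigroup (Proposition \ref{LqLr_stokes}) and the interpolation characterization of the solenoidal homogeneous Besov spaces $\dot B^\al_{p,q,0,\si}(\Om)$ (Lemma \ref{lemma3.1}(v)), together with Proposition \ref{prop.grad}(ii) relating $\|\nabla\cdot\|_{L^p}$ and $\|A^{1/2}\cdot\|_{L^p}$ for $1<p<n$. The key point is that, since $\varphi_0\in C^\infty_{0,\si}(\Om)$, the semigroup $e^{-tA}\varphi_0$ lies in the domain of every power of $A$, so all the following manipulations are legitimate on a dense class; the final estimate then persists for general $\varphi_0$ in the relevant space by density.

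For the \emph{first} inequality, I would first bound the $L^p$ norm of $e^{-tA}\varphi_0$ and of $\nabla e^{-tA}\varphi_0$ using Proposition \ref{LqLr_stokes}: writing $e^{-tA}\varphi_0 = e^{-\frac t2 A}\big(e^{-\frac t2 A}\varphi_0\big)$ and applying the two bounds of that proposition with the intermediate exponent $p$ gives
\[
\|e^{-tA}\varphi_0\|_{L^p(\Om)} \leq c\,t^{-\frac n2(\frac1r-\frac1p)}\|\varphi_0\|_{L^r(\Om)},\qquad
\|\nabla e^{-tA}\varphi_0\|_{L^p(\Om)} \leq c\,t^{-\frac12-\frac n2(\frac1r-\frac1p)}\|\varphi_0\|_{L^r(\Om)},
\]
valid for $1<p\leq n$ (and $1<r\le p$ or $1=r<p$), using that $\nabla e^{-\frac t2A}$ maps $L^p_\si\to L^p$ with the stated $t^{-1/2}$ loss when $p\le n$. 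By Proposition \ref{prop.grad}(ii) (or (i) for $p=n$) these give the same bounds for $\|A^{1/2}e^{-tA}\varphi_0\|_{L^p}$, i.e. for $\|e^{-tA}\varphi_0\|_{\dot H^1_{p,0,\si}(\Om)}$. Now I interpolate: since $0<\al<1$, Lemma \ref{lemma3.1}(v) identifies $\dot B^\al_{p,q,0,\si}(\Om)=(L^p_\si(\Om),\dot H^1_{p,0,\si}(\Om))_{\al,q}$, so
\[
\|e^{-tA}\varphi_0\|_{\dot B^\al_{p,q,0,\si}(\Om)}
\leq c\,\|e^{-tA}\varphi_0\|_{L^p_\si(\Om)}^{1-\al}\,\|e^{-tA}\varphi_0\|_{\dot H^1_{p,0,\si}(\Om)}^{\al}
\leq c\,t^{-\frac\al2-\frac n2(\frac1r-\frac1p)}\|\varphi_0\|_{L^r(\Om)},
\]
which is the first claimed estimate.

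For the \emph{second} inequality (the case $p\neq n$), I would instead start from the first bound of Proposition \ref{LqLr_stokes}, $\|e^{-tA}g\|_{L^p_\si}\le c\,t^{-\frac n2(\frac1r-\frac1p)}\|g\|_{L^r_\si}$ for $1\le r<p$, and the corresponding bound with $p$ replaced by the Sobolev-type exponent forcing $\dot H^1_{p,0,\si}\hookrightarrow\dot H^1_{r,0,\si}$-scaling; more cleanly, I would prove $\|e^{-tA}\|_{\dot H^1_{r,0,\si}\to\dot H^1_{p,0,\si}}\le c\,t^{-\frac n2(\frac1r-\frac1p)}$ by writing $A^{1/2}e^{-tA}= e^{-tA}A^{1/2}$, using $e^{-tA}:L^r_\si\to L^p$ with that rate, and that $A^{1/2}$ is an isomorphism $\dot H^1_{r,0,\si}\to L^r_\si$ and $\dot H^1_{p,0,\si}\to L^p_\si$ by Proposition \ref{prop.grad}(ii),(iii) (this is where $r,p<n$ is needed, hence the restriction $p\neq n$ combined with $p\le n$). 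Thus $e^{-tA}$ maps both endpoints $L^r_\si\to L^p_\si$ and $\dot H^1_{r,0,\si}\to\dot H^1_{p,0,\si}$ with the common norm bound $c\,t^{-\frac n2(\frac1r-\frac1p)}$; real interpolation with exponent $(\al,q)$ via Lemma \ref{lemma3.1}(v) on both sides then yields
\[
\|e^{-tA}\varphi_0\|_{\dot B^\al_{p,q,0,\si}(\Om)}\le c\,t^{-\frac n2(\frac1r-\frac1p)}\|\varphi_0\|_{\dot B^\al_{r,q,0,\si}(\Om)}.
\]
I expect the main obstacle to be the bookkeeping of admissible exponents and the honest justification of the endpoint bound $\|e^{-tA}\|_{\dot H^1_{r,0,\si}\to\dot H^1_{p,0,\si}}\lesssim t^{-\frac n2(\frac1r-\frac1p)}$: one must commute $A^{1/2}$ past $e^{-tA}$, invoke Proposition \ref{prop.grad}(iii) to identify $\dot H^1_{p,0,\si}$ with $D^{1/2}_p$ (legitimate only for $1<p<n$), and be careful that the interpolation identities of Lemma \ref{lemma3.1}(v) apply for the full range $0<\al<1$, $1\le q<\infty$ used here — and, at the $L^r$-endpoint, that $r=1$ is allowed only in the strict case $1=r<p$, which is why the hypothesis is phrased "$1<r\le p$ or $1=r<p$".
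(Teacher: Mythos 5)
Your proposal follows essentially the same route as the paper's proof: for the first estimate you combine the $L^r$--$L^p$ bounds of Proposition \ref{LqLr_stokes} for $e^{-tA}\varphi_0$ and $\nabla e^{-tA}\varphi_0$ (converted to $\|A^{1/2}\cdot\|$ via Proposition \ref{prop.grad}) and real-interpolate using the identification $(L^p_\si,\dot H^1_{p,0,\si})_{\al,q}=\dot B^\al_{p,q,0,\si}$, and for the second you commute $A^{1/2}$ with $e^{-tA}$ to get the endpoint bound $\dot H^1_{r,0,\si}\to\dot H^1_{p,0,\si}$ with rate $t^{-\frac n2(\frac1r-\frac1p)}$ and interpolate on both sides — which is exactly the paper's argument (its displays \eqref{n_1}, \eqref{n_2}, \eqref{n_3}). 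The only cosmetic difference is that you spell out the multiplicative interpolation inequality where the paper simply invokes the interpolation lemma, and you correctly flag the same exponent restrictions ($p<n$ for Proposition \ref{prop.grad}(ii)--(iii)) that force the condition $p\neq n$ in the second estimate.
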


\begin{proof}
From Proposition \ref{LqLr_stokes}, for $ 1 < r \leq p \leq  \infty$ or $1 \leq r < p \leq \infty$,   we have 
 \begin{align}
\label{n_1} \| e^{-tA}{\varphi}_0 \|_{ L^p (\Om)}& \leq c t^{  -\frac{n}{2}(\frac{1}{r}-\frac{1}{p})}  
 \| \varphi_0\|_{L^r (\Om)},
 \end{align}
 and  for $ 1 < r \leq p \leq  n$ or $1 \leq r < p \leq  n$,   we have 
 \begin{align}
 \label{n_2}\| \na e^{-tA}{\varphi}_0 \|_{ L^p (\Om)} &\leq c  t^{-\frac12 -\frac{n}{2}(\frac{1}{r}-\frac{1}{p})} \| \varphi_0\|_{L^r (\Om)}.
\end{align}

On the other hand, from Proposition \ref{prop.grad}, for $ 1 < r \leq p < n$ or $1 \leq r < p < n$, we have 
\begin{align}\label{n_3}
\begin{split}
\| \na e^{-tA}{\varphi}_0\|_{ L^p (\Om)}&\leq c\|A^{\frac{1}{2}}e^{-tA}\varphi_0\|_{L^p(\Om)}
 \\
&=c\|e^{-tA}A^{\frac{1}{2}}\varphi_0\|_{L^p(\Om)}\\
& \leq ct^{-\frac{n}{2}(\frac{1}{r}-\frac{1}{p})}\|A^{\frac{1}{2}}\varphi_0\|_{L^r(\Om)}\\
& \leq ct^{-\frac{n}{2}(\frac{1}{r}-\frac{1}{p})}\|\na \varphi_0\|_{L^r(\Om)}.
\end{split}
\end{align}
Applying  Proposition  \ref{Proposition2.8}  to \eqref{n_1} and \eqref{n_2},  for $0<\al<1$, $ 1 < r \leq p \leq  n$ or $1 \leq r < p \leq  n$, 
 we have 
\begin{align*} 
\|  e^{-tA}{\varphi}_0\|_{\dot B^{\al}_{p,q,0,\si} (\Om )}   \leq c t^{-\frac{\al}2 -\frac{n}{2}(\frac{1}{r}-\frac{1}{p})} \| \varphi_0\|_{L^r (\Om)}.
\end{align*}
We complete the proof of the first quantity of Lemma \ref{lemma.3.1}.

Again, applying Lemma \ref{lemma3.1} to \eqref{n_1} and \eqref{n_3}, for   $0<\al<1$, $ 1 < r \leq p < n$ or $1 \leq r < p < n$,   we have
\begin{align*}
\|e^{-tA}\varphi_0\|_{\dot{B}^\al_{p,q,0,\si}(\Om)}\leq  ct^{-\frac{n}{2}(\frac{1}{r}-\frac{1}{p})}\|\varphi_0\|_{\dot{B}^\al_{r,q, 0,\si}(\Om)}, \quad 1 \leq q < \infty.
\end{align*}
We complete the proof of the second  duality of Lemma \ref{lemma.3.1}.
\end{proof}

{\bf  Proof of Theorem \ref{theorem2} }\\
Now, we prove our theorem  by the duality argument. 
Observe the following  identity
\[
\int_\Om e^{-tA}{u}_0(x) \varphi_0(x) dx=<{u}_0,e^{-tA}\varphi_0>
\mbox{ for }u_0, \varphi_0\in C^\infty_{0,\sigma}(\Om).\]
Here,  $<\cdot,\cdot>$ means a dulaity paring between $\dot{B}^{-\al}_{p,\infty,\sigma}$ and $ \dot{B}^\al_{p',1,0,\sigma}(\Om)$.

Let $ \frac{n}{n -1} < p < \infty$, $ 1  <  q \leq  \infty$ and $ 0 < \al < 1$  so that $1 < p' < n$ and $1 \leq q'< \infty$. 
Let ${u}_0\in \dot{b}^{-1 +\frac{n}p}_{p,q,\sigma}(\Om):=$ the completion of $C^\infty_{0,\si}(\Om)$ in $\dot{B}^{-1 +\frac{n}p}_{p,q,\sigma}$.  From Lemma \ref{lemma.3.1}, for $1<p' <n$, $0<\al<1$ and $1\leq q'<\infty$, we have 
 \[
\|  e^{-tA}\varphi_0\|_{\dot B^{\al}_{p',q',0,\si} ({\mathbb R}^n )}   \leq\left\{\begin{array}{l} \vspace{2mm}
 c t^{-\frac{\al}2 } \|\varphi_0\|_{L^{p'}_\si (\Om)},\\
c\|\varphi_0\|_{\dot{B}^\al_{p',q',0,\si}(\Om)}.
\end{array}\right.
\]

Hence,  we have the inequalities
\begin{align*}
\Big|\int_\Om e^{-tA}{u}_0(x) \varphi_0(x) dx\Big|\leq 
\left\{\begin{array}{l}\vspace{2mm}
ct^{-\frac{\al}{2}}\|{u}_0\|_{\dot{B}^{-\al}_{p,q,\si}(\Om)}\|\varphi_0\|_{L^{p'}_\si (\Om)},\\
 c\|{u}_0\|_{\dot{B}^{-\al}_{p,q,\si}(\Om)}\|\varphi_0\|_{\dot{B}^\al_{p',1,0,\si}(\Om)}.
\end{array}\right.
\end{align*}
These implies  Theorem \ref{theorem2}.

\section{Proof of Theorem \ref{bilinear1}  }
\setcounter{equation}{0}
\label{section.theorem.1.4}

\begin{lemm}
\label{lemma.3.2}
Let $\varphi\in C^\infty_{0,\si}(\Om)$,   $ 0 < \al <1 $, $1<p<n$ and $1\leq q< \infty$.
Then,  it holds that
\begin{align*}
\| \na e^{-tA}{\varphi}_0 \|_{ L^{p} (\Om)} 
\leq 
 ct^{-\frac{1-\al}{2}-\frac{n}{2}(\frac{1}{r}-\frac{1}{p})}\|\varphi_0\|_{\dot{B}^\al_{r,q,0,\si}(\Om)}
\end{align*}
for $   1 <r \leq  p$ or $ 1=r<p.$
\end{lemm}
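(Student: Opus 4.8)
The plan is to interpolate between two $L^r$–$L^p$ type bounds for $\nabla e^{-tA}$, exactly as in the proof of Lemma~\ref{lemma.3.1}, but now using the interpolation scale on the \emph{source} side rather than on the target side. The two endpoint estimates are the following. First, from \eqref{n_2} (i.e. Proposition~\ref{LqLr_stokes} applied to $\nabla e^{-tA}$), for $1<r\le p\le n$ or $1\le r<p\le n$,
\[
\|\nabla e^{-tA}\varphi_0\|_{L^p(\Om)}\le c\,t^{-\frac12-\frac n2(\frac1r-\frac1p)}\|\varphi_0\|_{L^r(\Om)}.
\]
Second, a smoothed version in which one derivative is absorbed into the data: writing $\nabla e^{-tA}\varphi_0 = \nabla e^{-\frac t2 A}\,e^{-\frac t2 A}\varphi_0$ and using Proposition~\ref{prop.grad}(i), then the analyticity/commutation $A^{1/2}e^{-tA}=e^{-tA}A^{1/2}$ and Proposition~\ref{LqLr_stokes} as in \eqref{n_3}, one gets for $1<r\le p<n$ or $1\le r<p<n$,
\[
\|\nabla e^{-tA}\varphi_0\|_{L^p(\Om)}\le c\,t^{-\frac n2(\frac1r-\frac1p)}\|\nabla\varphi_0\|_{L^r(\Om)}
\le c\,t^{-\frac n2(\frac1r-\frac1p)}\|\varphi_0\|_{\dot H^1_{p,0}(\Om)} \quad(\text{via Lemma~\ref{lemma2.4}(i)}).
\]
Actually, to keep the $\dot H^1_{p,0}$-norm rather than $\|\nabla\cdot\|_{L^r}$ I would prefer to phrase the second endpoint as a bounded map from $\dot H^1_{r,0,\si}(\Om)$ to $L^p(\Om)$ with factor $t^{-\frac n2(\frac1r-\frac1p)}$, using Proposition~\ref{prop.grad}(iii).

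The second step is real interpolation. Fix $0<\al<1$. The operator $T_t:\varphi_0\mapsto \nabla e^{-tA}\varphi_0$ maps $L^r_\si(\Om)\to L^p(\Om)$ with norm $\lesssim t^{-\frac12-\frac n2(\frac1r-\frac1p)}$ and maps $\dot H^1_{r,0,\si}(\Om)\to L^p(\Om)$ with norm $\lesssim t^{-\frac n2(\frac1r-\frac1p)}$ (for $1\le r<p<n$). Applying the real interpolation functor $(\cdot,\cdot)_{\al,q}$ to the source spaces and using Lemma~\ref{lemma3.1}(v), namely $(L^r_\si(\Om),\dot H^1_{r,0,\si}(\Om))_{\al,q}=\dot B^\al_{r,q,0,\si}(\Om)$, together with the exactness/interpolation property of the operator norm under real interpolation,
\[
\|T_t\|_{\dot B^{\al}_{r,q,0,\si}(\Om)\to L^p(\Om)} \le c\,\big(t^{-\frac12-\frac n2(\frac1r-\frac1p)}\big)^{1-\al}\big(t^{-\frac n2(\frac1r-\frac1p)}\big)^{\al}
= c\,t^{-\frac{1-\al}{2}-\frac n2(\frac1r-\frac1p)},
\]
which is exactly the claimed bound $\|\nabla e^{-tA}\varphi_0\|_{L^p(\Om)}\le c\,t^{-\frac{1-\al}{2}-\frac n2(\frac1r-\frac1p)}\|\varphi_0\|_{\dot B^\al_{r,q,0,\si}(\Om)}$ for $\varphi_0\in C^\infty_{0,\si}(\Om)$.

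The main obstacle is bookkeeping about which ranges of $r,p$ keep us inside $(1,n)$ so that all three of Proposition~\ref{LqLr_stokes}, Proposition~\ref{prop.grad}, Lemma~\ref{lemma2.4}, and Lemma~\ref{lemma3.1}(v) are simultaneously available: the gradient estimate in Proposition~\ref{LqLr_stokes} needs $p\le n$, the identification $\dot H^1_{p,0,\si}=D^{1/2}_p$ needs $p<n$, and the solenoidal interpolation Lemma~\ref{lemma3.1}(v) needs $r<n$. Since the hypothesis is $1<p<n$ and $1\le r\le p$ (with strict inequality in the endpoint case $r=1<p$), all constraints are met, but I would state this carefully. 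A secondary point is to confirm the endpoint $r=1$: Proposition~\ref{LqLr_stokes} permits $r=1$ in the gradient estimate when $r<p$, and Lemma~\ref{lemma2.4}/\ref{lemma3.1} are stated down to $p=1$ for the relevant density and embedding statements, so the borderline case $1=r<p$ goes through as well. Finally, the estimate is first proved for $\varphi_0\in C^\infty_{0,\si}(\Om)$, which is all that is needed, but it extends by density to all of $\dot b^\al_{r,q,0,\si}(\Om)$ since that space is by definition the completion of $C^\infty_{0,\si}(\Om)$.
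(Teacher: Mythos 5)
Your proposal is correct and follows essentially the same route as the paper: the two endpoint bounds (Proposition \ref{LqLr_stokes} for $\nabla e^{-tA}$ on $L^r_\si$, and the chain $\|\nabla e^{-tA}\varphi_0\|_{L^p}\le c\|A^{1/2}e^{-tA}\varphi_0\|_{L^p}=c\|e^{-tA}A^{1/2}\varphi_0\|_{L^p}\le ct^{-\frac n2(\frac1r-\frac1p)}\|\nabla\varphi_0\|_{L^r}$ via Proposition \ref{prop.grad}) are exactly \eqref{n11} and \eqref{n22}, and the paper then interpolates on the source side using Lemma \ref{lemma3.1} just as you do. The exponent bookkeeping $(t^{-\frac12-\frac n2(\frac1r-\frac1p)})^{1-\al}(t^{-\frac n2(\frac1r-\frac1p)})^{\al}=t^{-\frac{1-\al}2-\frac n2(\frac1r-\frac1p)}$ matches the stated conclusion.
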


\begin{proof}
From the second estimate of Proposition \ref{LqLr_stokes}, for $ 1 < r \leq p \leq n$ or $1 \leq r < p \leq n$,   we have 
 \begin{align}
\label{n11} 
 \| \na e^{-tA}{\varphi}_0 \|_{ L^p (\Om)} &\leq c  t^{-\frac12 -\frac{n}{2}(\frac{1}{r}-\frac{1}{p})} \| \varphi_0\|_{L^r (\Om)}.
\end{align}

On the other hand, combining Proposition \ref{prop.grad} and Proposition \ref{LqLr_stokes}, we obtain
\begin{align}
\label{n22}
\begin{split}
\|\nabla e^{-tA}{\varphi}_0\|_{L^p (\Omega)} 
&\leq c\|A^{\frac{1}{2}}e^{-tA}\varphi_0\|_{L^p(\Omega)} \\
&= \|e^{-tA}A^{\frac{1}{2}}\varphi_0\|_{L^p(\Omega)} \\
&\leq ct^{-\frac{n}{2}(\frac{1}{r}-\frac{1}{p})}\|A^{\frac{1}{2}}\varphi_0\|_{L^r(\Omega)} \\
&\leq ct^{-\frac{n}{2}(\frac{1}{r}-\frac{1}{p})}\|\nabla\varphi_0\|_{L^r(\Omega)}
\end{split}
\end{align}
for $1 < r \leq p < n$ or $1 \leq r < p < n$.

 Apply the  interpolation property in Lemma \ref{lemma3.1}  to \eqref{n11} and \eqref{n22}. Then we have
\begin{align*}
\|\nabla e^{-tA}\varphi_0\|_{L^p(\Om)}\leq 
 ct^{-\frac{1-\al}{2}-\frac{n}{2}(\frac{1}{r}-\frac{1}{p})}\|\varphi_0\|_{\dot{B}^\al_{r,q,0,\si}(\Om)},
\end{align*}
where $0<\al<1, 1<p<n, 1\leq q<\infty.$
\end{proof}

{\bf Proof of Theorem \ref{bilinear1} }\\
For  $\varphi\in C^\infty_{0,\sigma}(\Om)$,  the following identity holds 
\begin{align*}
<e^{-tA}{\mathbb P}\mbox{\rm div}{\mathcal F},\varphi> & =-<{\mathcal F}, \nabla e^{-tA}\varphi> \leq \| {\mathcal F} \|_{L^r (\Om)}  \|\nabla e^{-tA}\varphi\|_{L^{r'}(\Om)}.
\end{align*}
According to Proposition \ref{LqLr_stokes}, 
 for $1< p'\leq r'\leq n$ or $1\leq p'<r'\leq n$ it holds that
\[\|\nabla e^{-tA}\varphi\|_{L^{r'}(\Om)}\leq ct^{-\frac{1}{2}-\frac{n}{2}(\frac{1}{p'}-\frac{1}{r'})}\|\varphi\|_{L^{p'}(\Om)}, \quad t>0.
\]
Hence, we have 
\begin{align*}
\Big|<e^{-tA}{\mathbb P}\mbox{\rm div}{\mathcal F},\varphi>\Big|\leq ct^{-\frac{1}{2}-\frac{n}{2}(\frac{1}{p'}-\frac{1}{r'})}\|{\mathcal F}\|_{L^r(\Om)}\|\varphi\|_{L^{p'}(\Om)}.
\end{align*}
This leads to the first estimate of  Theorem \ref{bilinear1}. According to Lemma \ref{lemma.3.2},
\begin{align*}
\| \na e^{-tA}{\varphi}_0 \|_{ L^{r'} (\Om)} &\leq c  t^{-\frac{1-\al}2 -\frac{n}{2}(\frac{1}{p'}-\frac{1}{r'})} \| \varphi_0\|_{\dot B^\al_{p',q',0,\si }  (\Om)}
\end{align*}
for $   1 <p' \leq  r' < n$ or $ 1=p'<r'< n$, $ 0 < \al <1 $.  
Hence, we have 
\begin{align*}
\Big|<e^{-tA}{\mathbb P}\mbox{\rm div}{\mathcal F},\varphi>\Big|\leq ct^{-\frac{1-\al}{2}-\frac{n}{2}(\frac{1}{p'}-\frac{1}{r'})}\|{\mathcal F}\|_{L^r(\Om)}\|\varphi\|_{ \dot{B}^\al_{p',q',0,\si }  (\Om)}.
\end{align*}
This leads to the second estimate of  Theorem \ref{bilinear1}.

\section{\bf Proof of Theorem \ref{theorem1} (Part I: $L^p$ and $L^n$ Estimates)}

\setcounter{equation}{0}
\label{external-force}

We begin with the construction of approximate solutions by iteration. 
Let ${u}^{(0)} = e^{-tA}{u}_0$. After obtaining ${u}^{(1)},\cdots, {u}^{(m)}$, we 
construct  ${u}^{(m+1)}$ defined by
\begin{equation}
\label{approx_n}
{u}^{(m+1)}(t):=e^{-tA}{u}_0-\int^t_0e^{-(t-\tau)A}{\mathbb P}\mbox{\rm div}({u}^{(m)}\otimes {u}^{(m)})(\tau) d\tau.
\end{equation}
 
 Before proceeding, we recall a well-known identity related to the Beta function. This elementary calculation will be frequently used throughout this section to estimate the time singular integrals
 \begin{lemm}
 \label{Lemma2}
  Let $\alpha, \beta < 1$. Then there exists a constant $C_{\alpha, \beta} = B(1-\alpha, 1-\beta)$ such that for all $t > 0$,
  $$\int_0^t (t-\tau)^{-\alpha} \tau^{-\beta} d\tau = C_{\alpha, \beta} \, t^{1-\alpha-\beta}.$$
 \end{lemm}
 
 \subsection{\bf Uniform boundedness }

 \subsubsection*{Step 1a: Estimate in $L^p$  with time decay}
 
 \mbox{}\\
 
 Let  $n < p < \infty$. 
  Let ${u}_0\in \dot{b}^{-1+\frac{n}{p}}_{p,\infty,\si}(\Om)$ and 
 \begin{align*}
   \| {u}_0 \|_{ \dot{B}^{-1 +\frac{n}p}_{p,\infty, \si} } : =  M_0.
   \end{align*}
    According to Theorem \ref{theorem2}, we have   
\begin{align*}
\|e^{-tA}{u}_0\|_{L^p  (\Om)} & \leq  c_1
t^{-\frac{1}{2} +\frac{n}{2p} }  \| {u}_0 \|_{ \dot{B}^{-1 +\frac{n}p}_{p,\infty,\si}  (\Om) }=c_1
t^{-\frac{1}{2} +\frac{n}{2p} } M_0.
\end{align*}
This implies that  \begin{align}\label{0207-4}
\|  {u}^{(0)}\|_{L^\infty_{\frac12 -\frac{n}{2p}} (0, \infty; L^p (\Om))} \leq c_1M_0.
\end{align}

 Assume  that $ \|  {u}^{(k)} \|_{L^\infty_{\frac12 -\frac{n}{2p}} (0, \infty; L^p (\Om))}\leq  M$, $k=1,\cdots,m$.
According to  Theorem   \ref{bilinear1}, we have 
\begin{align}
\label{kind2}
\begin{split}
\int^t_0\|e^{-(t-\tau)A}{\mathbb P}\mbox{\rm div}({u}^{(m)}\otimes {u}^{(m)})(\tau) \|_{L^p(\Om)}d\tau
& \leq c \int^t_0(t-\tau)^{-\frac{1}{2}-\frac{n}{2p}}\|u^{(m)} \otimes u^{(m)}\|_{L^{\frac{p}2}(\Om)}  d\tau\\
&\leq c\int^t_0(t-\tau)^{-\frac{1}{2}-\frac{n}{2p}}\|u^{(m)}\|_{L^p(\Om)}^2 d\tau\\
&\leq c\int^t_0(t-\tau)^{-\frac{1}{2}-\frac{n}{2p}}\tau^{-1+\frac{n}{p}}d\tau \, \|u^{(m)}\|_{L^\infty_{\frac{1}{2}-\frac{n}{2p}}(0,\infty; L^p(\Omega))}^2 \\
& \leq c t^{-\frac{1}{2}+\frac{n}{2p}} M^2, \quad n<p<\infty,
\end{split}
\end{align}
where the last inequality follows directly from Lemma \ref{Lemma2}.

 Combining \eqref{approx_n},  \eqref{0207-4} with \eqref{kind2} we have  
\begin{align*}
\begin{split}
\|{u}^{(m+1)}\|_{L^\infty_{\frac12 -\frac{n}{2p}} (0, \infty;  L^p (\Om)) }
& \leq c_1 M_0 +c_2M^2 .
\end{split}
 \end{align*}

We take $M_0 > 0$ so small that  
\begin{equation}
\label{cond_1}
    c_1 c_2 M_0 \leq \frac{1}{4}
\end{equation}
and define $M$ by 
\begin{equation}
\label{cond_2}
    M = 2 c_1 M_0. 
\end{equation}
Then, for the base case $m=0$, we trivially have $\|u^{(0)}\|_{L^\infty_{\frac{1}{2}-\frac{n}{2p}}(0,\infty; L^p(\Omega))} \leq c_1 M_0 = \frac{M}{2} \leq M$.
Assuming that the bound $\|u^{(k)}\|_{L^\infty_{\frac{1}{2}-\frac{n}{2p}}(0,\infty; L^p(\Omega))} \leq M$ holds for $k=1,\cdots,m$, we substitute this into the recursive inequality to obtain:
\begin{align*}
\begin{split}
\|u^{(m+1)}\|_{L^\infty_{\frac{1}{2}-\frac{n}{2p}} (0, \infty; L^p (\Om)) }
& \leq c_1 M_0 + c_2 M^2 \\
& = c_1 M_0 + c_2 (2 c_1 M_0)^2 \\
& = c_1 M_0 (1 + 4 c_1 c_2 M_0).
\end{split}
\end{align*}
In view of condition \eqref{cond_1} (i.e., $4 c_1 c_2 M_0 \leq 1$), we arrive at:
\begin{align*}
\|u^{(m+1)}\|_{L^\infty_{\frac{1}{2}-\frac{n}{2p}} (0, \infty; L^p (\Om)) }
& \leq c_1 M_0 (1 + 1) = 2 c_1 M_0 = M.
\end{align*}
Therefore, by mathematical induction, we conclude that
\begin{align}\label{0208-1}
\|u^{(m)}\|_{L^\infty_{\frac{1}{2}-\frac{n}{2p}} (0, \infty; L^p (\Om))}\leq M \quad \text{for all integers } m \geq 0.
\end{align}

Thus we have uniform boundedness in $L^p$. 
Next, we turn to the case of $L^n$.

 \subsubsection*{Step 1b: Estimate in $L^n$  with time  decay 
 } 
 \mbox{}\\
 
 Let $ {u}_0 \in  L^n_\si (\Om) $ with 
\[
\|u_0\|_{L^n(\Om)}:=N_0.
\]

 Note that $  L^n_\si  (\Om) \subset  \dot{B}^{-1 +\frac{n}{p_0}}_{p_0, \infty,\si} (\Om)$ for  
any  $ n < p_0 < \infty$. 
Let 
\[
\|u_0\|_{\dot{B}^{-1 +\frac{n}{p_0}}_{p_0, \infty,\si} (\Om)}:=M_0.
\]
From  the result of Step 1, if we choose $M_0$  and $M$ satisfying \eqref{cond_1} and \eqref{cond_2}, respectively, that is,
	   \[ c_1c_2 M_0\leq \frac{1}{4},\quad  M=   2c_1M_0, \]
then 
 \begin{align} \label{0604-1}
\|{u}^{(m)}\|_{L^\infty_{\frac12 -\frac{n}{2p_0}} (0, \infty; L^{p_0} (\Om))}\leq M\quad \mbox{ for all positive integer }\quad m\geq 0.
\end{align}
According to Theorem \ref{theorem2},  we have 
\begin{align}
\label{0207-2-1}
\|  {u}^{(0)}\|_{L^\infty (0, \infty; L^n (\Om))}  = \|e^{-tA}{u}_0\|_{L^\infty (0, \infty; L^n  (\Om) ) } & \leq  c_5
 \| {u}_0 \|_{L^n (\Om ) }=c_5N_0.
 \end{align}
Assume that
\[
 \|{u}^{(k)}\|_{L^\infty  (0, \infty;  L^n (\Om))}\leq N, \quad  k=1,\cdots, m.
\]
Let $  \frac{n}{n-1}<p_1 < n<p_0$ with  $ \frac1{p_1} = \frac1n + \frac1{p_0}$. 
According to the first estimate  of Theorem  \ref{bilinear1},  the estimate in section 4.1 and \eqref{0604-1}, 
 we have 
\begin{align}\label{kind2-1}
\begin{split}
 &\int^{t}_0\|  e^{-(t -\tau)A} \Big( {\mathbb P} \big( {\rm div} \, {u}^{(m)}(\tau)\otimes  {u}^{(m)}(\tau)  \big) \Big) \|_{L^{n}(\Om)} d\tau \\
 &\leq c\int^{t}_0(t-\tau)^{ -\frac{1}{2} -\frac{n}{2{p_0}} }\|{u}^{(m)}(\tau)\otimes  {u}^{(m)}(\tau)\|_{L^{p_1}  (\Om)}d\tau\\
  &\leq c\int^{t}_0(t-\tau)^{ -\frac{1}{2} -\frac{n}{2p_0} }\| {u}^{(m)}(\tau)\|_{ L^{p_0}  (\Om) } \|{u}^{(m)} \|_{L^n  (\Om)}d\tau\\
 &\leq cNM\int^t_0 (t-\tau)^{-\frac{1}{2} -\frac{n}{2p_0} }\tau^{-\frac12  +\frac{n}{2p_0}}   d\tau=c_7NM.
\end{split}
\end{align}
Here, the exact time-integration in the last equality follows directly from Lemma \ref{Lemma2}.
Combining \eqref{0207-2-1} and  \eqref{kind2-1}, we have
\begin{align}
\label{1224-1}
\begin{split}
 \|{u}^{(m+1)}\|_{L^\infty  (0, \infty;  L^n  (\Om)) }\leq c_5N_0+c_7NM.
\end{split}
 \end{align}

We take    $ N  $ with $c_5N_0=\frac{N}{2}$.  
We take $M_0$  so small that  
\begin{equation}
\label{cond_3}
c_1c_7M_0\leq \frac{1}{4}
\end{equation}
together with the restriction \eqref{cond_2}.
Then
we  have  
\begin{align*}
 \|{u}^{(m+1)}\|_{L^\infty  (0, \infty;  L^n  (\Om)) }\leq N.
 \end{align*}
Hence, we conclude that
 \begin{align}\label{0208-2}
\|{u}^{(m)}\|_{L^\infty  (0, \infty; L^n (\Om))}\leq N \quad \mbox{ for all positive  integer } \quad m\geq 0.
\end{align}

\subsection*{Additional Remark}
We can also obtain the weighted estimate of $\nabla u^{(m)}$ which is uniformly bounded with the time-weight $t^{1/2}$ for all $t > 0$.
Observe that
\begin{align}
\label{eqr1}
\|\nabla e^{-tA}u_0\|_{L^n(\Om)}\leq ct^{-\frac{1}{2}}\| u_0\|_{L^n(\Om)}
=c_5' t^{-\frac12 }N_0.
\end{align}
%

On the other hand, from Proposition \ref{LqLr_stokes}, we have 
\begin{align}
\label{eqr2}
\notag& \int^t_0\|\nabla e^{-(t-\tau)A}{\mathbb P}(u^{(m)}\cdot \nabla u^{(m)})\|_{L^n(\Om)}d\tau\\
&\leq c\int^t_0(t-\tau)^{-\frac{1}{2}}\| e^{-\frac{1}{2}(t-\tau)A}{\mathbb P}(u^{(m)}\cdot \nabla u^{(m)})\|_{L^n(\Om)}d\tau
 \\
\notag& \leq c\int^t_0(t-\tau)^{-\frac{1}{2}-\frac{n}{2p_0}}\| u^{(m)}(\tau)\|_{L^{p_0}(\Om)}\| \nabla u^{(m)}(\tau)\|_{L^n(\Om)}d\tau\\
 &\leq 
 c \int^t_0(t-\tau)^{-\frac{1}{2}-\frac{n}{2p_0}} \tau^{ -1 +\frac{n}{2p_0}}  d\tau\\
\notag & \qquad   \times \Big( \sup_{0<\tau<t}\tau^{\frac{1}{2}-\frac{n}{2p_0}}\| u^{(m)}(\tau)\|_{L^{p_0}(\Om)}\Big)\Big(\sup_{0<\tau<t}\tau^{\frac{1}{2}}\| \nabla u^{(m)}(\tau)\|_{L^n(\Om)}\Big)\\
 &\leq 
 c_7't^{-\frac{1}{2}}\Big(\sup_{0<\tau<t}\tau^{\frac{1}{2}-\frac{n}{2p_0}}\| u^{(m)}(\tau)\|_{L^{p_0}(\Om)}\Big)\Big(\sup_{0<\tau<t}\tau^{\frac{1}{2}}\| \nabla u^{(m)}(\tau)\|_{L^n(\Om)}\Big),
\end{align}
where the last inequality follows directly from Lemma \ref{Lemma2}.

Under the induction hypothesis 
$\sup_{0<\tau<t}\tau^{\frac{1}{2}}\| \nabla u^{(m)}(\tau)\|_{L^n(\Om)}\leq \tilde{M}$,  
combining the linear and nonlinear estimates leads to
\begin{equation*}
t^{\frac{1}{2}}\|\nabla u^{(m+1)}(t)\|_{L^n(\Om)} \leq c_5' N_0 + c_7' M_0 \tilde{M}.
\end{equation*}

Hence, if $\tilde{M}=2c_5'N_0$ and $c_1c_7'M_0\leq \frac{1}{4}$ together with the restriction \eqref{cond_2}, then
\[
\sup_{t>0} t^{\frac{1}{2}}\|\nabla u^{(m)}(t)\|_{L^n(\Om)}\leq \tilde{M} \quad \mbox{ for all integers} \quad  m\geq 0.
\]

Therefore, the sequence $\{u^{(m)}\}$ is uniformly bounded in both $L^p$ and $L^n$, (and similarly for their gradients),
provided that the initial data satisfies the smallness conditions:
\begin{equation}
c_1c_2 M_0 \leq \tfrac{1}{4}, \quad 
c_1c_7 M_0 \leq \tfrac{1}{4} \quad 
(c_1c_7' M_0 \leq \tfrac{1}{4} 
\mbox{ for the gradients}).
\end{equation}
\subsection{\bf Uniform convergence}
\mbox{}\\

To prove the convergence of the sequence $\{u^{(m)}\}$, we define $U^{(m)} := u^{(m+1)} - u^{(m)}$ for $m \ge 0$. Then $U^{(m)}$ satisfies the integral equation
\begin{equation}
\label{approx_n2}
{U}^{(m)}(t) = -\int^t_0 e^{-(t-\tau)A}{\mathbb P}\mbox{\rm div}({U}^{(m-1)}\otimes {u}^{(m)}+{u}^{(m-1)}\otimes  {U}^{(m-1)})(\tau) d\tau
\end{equation}
for $m \ge 1$ (with $U^{(0)} := u^{(1)} - u^{(0)}$).

\subsubsection*{Step 2a: Convergence in $L^p$ with time decay} 
\mbox{}\\

Suppose that $\{u^{(m)}\}$ is uniformly bounded in $L^\infty_{\frac{1}{2}-\frac{n}{2p}} (0, \infty; L^p (\Omega))$ satisfying \eqref{0208-1}.
Taking the $L^p$ norm of \eqref{approx_n2} and applying the bilinear estimate along with Hölder's inequality, we explicitly have
\begin{align*}
\|U^{(m)}(t)\|_{L^p(\Om)} 
&\leq c \int_0^t (t-\tau)^{-\frac{1}{2}-\frac{n}{2p}} \Big( \|U^{(m-1)}(\tau)\|_{L^p}\|u^{(m)}(\tau)\|_{L^p} \\
&\qquad\qquad\qquad\qquad\quad + \|u^{(m-1)}(\tau)\|_{L^p}\|U^{(m-1)}(\tau)\|_{L^p} \Big) d\tau \\
&\leq c \int_0^t (t-\tau)^{-\frac{1}{2}-\frac{n}{2p}} \tau^{-1+\frac{n}{p}} d\tau \\
&\qquad \times \Big( \|u^{(m)}\|_{L^\infty_{\frac12 -\frac{n}{2p}}} + \|u^{(m-1)}\|_{L^\infty_{\frac12 -\frac{n}{2p}}} \Big) \|U^{(m-1)}\|_{L^\infty_{\frac12 -\frac{n}{2p}}},
\end{align*}
where the norms in the last line are taken over the space $L^\infty_{\frac12 -\frac{n}{2p}} (0, \infty; L^p (\Om))$. 
Applying Lemma \ref{Lemma2} (or the exact time-integration property) and recalling the uniform bound $M \le c_1 M_0$ from the previous step, we obtain the recurrence relation:
\begin{align}
\begin{split}
\|{U}^{(m)}\|_{L^\infty_{\frac12 -\frac{n}{2p}} (0, \infty; L^p (\Om))}
& \leq c_8 \Big( \|{u}^{(m)}\|_{L^\infty_{\frac12 -\frac{n}{2p}}}
+ \|{u}^{(m-1)}\|_{L^\infty_{\frac12 -\frac{n}{2p}}} \Big)
\|{U}^{(m-1)}\|_{L^\infty_{\frac12 -\frac{n}{2p}}}\\
& \leq c_8 (M + M) \|{U}^{(m-1)}\|_{L^\infty_{\frac12 -\frac{n}{2p}}}\\
& \leq 2c_1c_8 M_0 \|{U}^{(m-1)}\|_{L^\infty_{\frac12 -\frac{n}{2p}}}.
\end{split}
\end{align}

By Assumption~\eqref{cond_2} together with the smallness condition $c_1c_8 M_0 \leq \tfrac{1}{4}$, we deduce
\[
\|{U}^{(m)}\|_{L^\infty_{\frac12 -\frac{n}{2p}} (0, \infty; L^p (\Om))}
\leq \tfrac{1}{2}\|{U}^{(m-1)}\|_{L^\infty_{\frac12 -\frac{n}{2p}} (0, \infty; L^p (\Om))}.
\]

This geometric decay guarantees the absolute convergence of the series $\sum_{k=0}^\infty {U}^{(k)}$ in the Banach space $L^\infty_{\frac{1}{2}-\frac{n}{2p}} (0, \infty; L^p (\Omega))$.
Since the partial sum satisfies the telescoping identity $\sum_{k=0}^{m-1}{U}^{(k)} = {u}^{(m)} - {u}^{(0)}$,
the sequence $\{u^{(m)}\}$ converges strongly to a limit function $u$ in $L^\infty_{\frac{1}{2}-\frac{n}{2p}} (0, \infty; L^p (\Omega))$.
Finally, by the weak lower semi-continuity of the norm, the limit $u$ satisfies the same uniform estimate \eqref{0208-1}.

\subsubsection*{Step 2b: Convergence in $L^n$ with time decay}
\mbox{}\\

Suppose that $\{u^{(m)}\}$ is uniformly bounded in $L^\infty (0, \infty; L^n_\si (\Om))$ satisfying \eqref{0208-1} for some $p_0>n$ and \eqref{0208-2}.
By the exact same process (using Hölder's inequality and Lemma \ref{Lemma2}) as appeared in the proof of \eqref{1224-1}, we obtain the following estimate for the $L^n$ norm:
\begin{align*}
\begin{split}
\|{U}^{(m)}\|_{L^\infty  (0, \infty; L^n (\Om))}
& \leq c_9 \Big( \|{u}^{(m)}\|_{L^\infty_{\frac12 -\frac{n}{2p_0}} (0, \infty; L^{p_0} (\Om))}  \\
& \qquad\qquad + \|{u}^{(m-1)}\|_{L^\infty_{\frac12 -\frac{n}{2p_0}} (0, \infty; L^{p_0} (\Om))} \Big) \\
& \qquad \times \|{U}^{(m-1)}\|_{L^\infty  (0, \infty; L^n (\Om))}\\
& \leq 2c_1c_9 M_0\|{U}^{(m-1)}\|_{L^\infty  (0, \infty; L^n (\Om))}.
\end{split}
\end{align*}

By Assumptions~\eqref{cond_2} and \eqref{cond_3}, together with the smallness condition $c_1c_9 M_0 \leq \tfrac{1}{4}$, we deduce
\[
\|{U}^{(m)}\|_{L^\infty  (0, \infty; L^n (\Om))}\leq  \frac{1}{2}
 \|{U}^{(m-1)}\|_{L^\infty  (0, \infty; L^n (\Om))}.
\]
Following the exact same reasoning as in Step 2a, the sequence $\{{u}^{(m)}\}$ converges strongly to a limit $u$ in  $L^\infty (0, \infty; L^n_\si (\Om))$, and by lower semi-continuity, ${u}$ satisfies the estimate \eqref{0208-2}.

Thus we have established the existence of mild solutions in both $L^p$ and $L^n$ spaces. To complete the proof of Theorem~\ref{theorem1}, it remains to show the corresponding bounds in the critical Besov space and the uniqueness of solutions, which we address in the next section.

\section{\bf Proof of Theorem \ref{theorem1} (Part II: Besov Estimates and Uniqueness)}
\label{section.besov}

\setcounter{equation}{0}

In this section we refine the previous existence result by deriving uniform bounds 
in the critical Besov space and proving uniqueness of mild solutions.

It is easy to check that
\begin{align}\label{240118-1}
{u}=e^{-tA}{u}_0-\int^t_0e^{-(t-\tau)A}{\mathbb P}\mbox{\rm div}({u}\otimes {u})(\tau) d\tau\mbox{ in the sense of distributions}.
\end{align}
From the first estimate  of Theorem \ref{theorem2}, for $ n < p < \infty$,  we have   
\begin{align}\label{0207-22}
\|e^{-tA}{ u}_0\|_{ \dot{B}^{-1 +\frac{n}p}_{p, \infty,\si }  (\Om)} \leq c_1  \| { u}_0\|_ { \dot{B}^{-1 +\frac{n}p}_{p, \infty,\si } (\Om) }= c_1M_0.
\end{align}

According to the second estimate  of Theorem \ref{bilinear1}, for $ n < p < \infty$,    we have 
\begin{align}\label{kind2-1-1}
\begin{split}
 &\int^{t}_0\|  e^{-(t -\tau)A} \Big( {\mathbb P} \big( {\rm div} \, { u}(\tau)\otimes  { u}(\tau)  \big) \Big) \|_{  \dot{B}^{-1 +\frac{n}p}_{p, \infty} (\Om)} d\tau \\
 &\leq c\int^{t}_0(t-\tau)^{  -\frac{n}{p} }\|{ u}(\tau)\otimes  { u}(\tau)\|_{L^{\frac{p}2} (\Om)}d\tau\\
  &\leq c\int^{t}_0(t-\tau)^{  -\frac{n}{p} }\| { u}(\tau)\|^2 _{ L^{p}  (\Om) }  d\tau\\
 &\leq cM^2\int^t_0 (t-\tau)^{ -\frac{n}{p} }\tau^{-1  +\frac{n}{p}}   d\tau\\
 &=c_{10}M^2.
\end{split}
\end{align}
Here,  the last identity follows directly from Lemma \ref{Lemma2}.
Combine the linear and bilinear bounds. Then we obtain the following uniform estimate
\begin{align*}
\begin{split}
\|{ u}\|_{L^\infty  (0, \infty;   \dot{B}^{-1 +\frac{n}p}_{p, \infty} (\Om) ) }\leq c_1M_0+c_{10}M^2.
\end{split}
 \end{align*}

\subsection{Uniqueness of solution}
\mbox{}\\

Next, we turn to the uniqueness of mild solutions in this framework.

Let $u, \tilde{u} \in L^\infty_{\frac{1}{2}-\frac{n}{2p}} (0,\infty; L^p_\sigma (\Omega))$ be mild solutions of \eqref{e1} corresponding to the same initial data $u_0$ (in the form \eqref{240118-1}). 
Setting $U = u - \tilde{u}$, since they share the identical initial data, the linear term vanishes, and $U$ satisfies the integral equation
\[
U(t) = -\int^t_0 e^{-(t-\tau)A}{\mathbb P}\mbox{\rm div}\big( U \otimes u + \tilde{u} \otimes U \big)(\tau) d\tau, \quad t>0.
\]

Assume that 
\[
\sup_{0 < t < \infty} t^{\frac{1}{2}-\frac{n}{2p}} \|u(t)\|_{L^p(\Omega)} = M, \quad 
\sup_{0 < t < \infty} t^{\frac{1}{2}-\frac{n}{2p}} \|\tilde{u}(t)\|_{L^p(\Omega)} = M_1.
\]
Then, by the bilinear estimate and Lemma \ref{Lemma2}, we have
\begin{align*}
\|U(t)\|_{L^p(\Om)} & \leq c \int^t_0 (t-\tau)^{-\frac{1}{2}-\frac{n}{2p}} \Big( \|u(\tau)\|_{L^p(\Om)} + \|\tilde{u}(\tau)\|_{L^p(\Om)} \Big) \|U(\tau)\|_{L^p(\Om)} d\tau \\
& \leq c (M + M_1) \int^t_0 (t-\tau)^{-\frac{1}{2}-\frac{n}{2p}} \tau^{-1 +\frac{n}{p}} \tau^{\frac{1}{2}-\frac{n}{2p}} \|U(\tau)\|_{L^p(\Om)} d\tau \\
& \leq c_* (M + M_1) t^{-\frac{1}{2} + \frac{n}{2p}} \|U\|_{L^\infty_{\frac{1}{2}-\frac{n}{2p}}(0,\infty; L^p(\Omega))},
\end{align*}
where $c_* = c \cdot B(\frac{1}{2} - \frac{n}{2p}, \frac{n}{p})$.

Multiplying both sides by $t^{\frac{1}{2}-\frac{n}{2p}}$ and taking the supremum over $t > 0$, we obtain the weighted estimate for the difference:
\begin{align*}
\|U\|_{L^\infty_{\frac{1}{2}-\frac{n}{2p}} (0, \infty; L^p (\Om))}
\leq c_* (M + M_1) \|U\|_{L^\infty_{\frac{1}{2}-\frac{n}{2p}} (0, \infty; L^p (\Om))}.
\end{align*}

By choosing $M_0$ even smaller if necessary (such that $c_*(M + M_1) < 1$ with $M = 2c_1 M_0$ and $M_1 \le M$), we deduce that 
\[
\|U\|_{L^\infty_{\frac{1}{2}-\frac{n}{2p}} (0, \infty; L^p (\Om))} = 0.
\]
Therefore, $U \equiv 0$, which implies $u \equiv \tilde{u}$. Thus, the mild solution is unique in this class.
%
%

\appendix

\numberwithin{equation}{section}

\numberwithin{theo}{section}



%
%
%

 \section{Proofs of function space properties}
\label{appendix1}


\subsection{Proof of Lemma \ref{lemma2.4}}
\label{app:char}

 \mbox{} \\

\noindent{\bf Proof of  (i).}

Let   $ \Om$  be an   exterior domain and let $ f \in \dot H^1_p (\Om)$ for $p\in (1,n)$.
By the restriction,  there is $F \in \dot H^1_p ({\mathbb R}^n)$ such that  $F|_\Om=f$, $ \|f\|_{\dot{H}^1_p(\Om)}\leq \| F\|_{\dot H^1_p ({\mathbb R}^n)} \leq 2 \| f\|_{\dot H^1_p (\Om)}$.  
According to (iii) of Proposition \ref{prop2.2} in Section \ref{notation},  we have
\begin{align*}
\| \na F \|_{L^p ({\mathbb R}^n)}\approx \|  F\|_{\dot H^1_p ({\mathbb R}^n)}.
\end{align*}
On the other hand, by the argument of the restriction it is trivial that
\[
\|\nabla f\|_{L^p(\Om)}\leq  \|\nabla F\|_{L^p({\mathbb R}^n)}.\]
Combining the above two  estimates, we have
\[
\|\nabla f\|_{L^p(\Om)}\leq c\|  F \|_{\dot H^1_p (\Om)}\leq c \| f\|_{\dot H^1_p (\Om)}.
\]
According to Lemma \ref{lemma2.1} in Section \ref{notation},  the following embedding also holds:
\[
\|f\|_{ L^{\frac{np}{n-p}}(\Om) }\leq c\| f\|_{\dot H^1_p (\Om)}.
\]
 
Conversely,  for $ f \in \{ f \in L^{\frac{np}{n-p}}(\Om) \, | \, \na f \in [L^p ( \Om)]^n \}$, we consider  
Stein's extension $ Ef$ (or
   extension operator $E_\Om$ introduced  later on)  such that  
\begin{align*}
\| \na E_\Om f\|_{L^p ({\mathbb R}^n)} &  \leq c \Big(  \|\na  f\|_{L^p (\Om)} + \| f\|_{L^p (\Om_R)}  \Big)   \leq c \Big(  \|\na  f\|_{L^p (\Om)} + \| f\|_{L^{\frac{np}{n-p}}  (\Om_R)}  \Big),\\
\|  E_\Om f\|_{L^{\frac{np}{n-p}}  ({\mathbb R}^n)} &  \leq c \| f\|_{L^{\frac{np}{n-p}}  (\Om)}, 
\end{align*}
where $\Om_R = \Om \cap B_R$ (see \cite{St}).  The second inequality implies $ E_\Om f \in {\mathcal S}_h'$. 
Hence, we conclude that 
\begin{align}
\dot H^1_p (\Om) =  \{ f \in L^{\frac{np}{n-p}}(\Om) \, | \, \na f \in [L^p ( \Om)]^n \}
\end{align}
with $ \|f \|_{\dot H^1_p (\Om)} \approx \| \na f \|_{L^p (\Om)} + \| f\|_{L^{\frac{np}{n-p}} (\Om)}$.

This completes the proof of (i).

\medskip
\noindent{\bf Proof of (ii).}

Let  $K$ be a compact subset in ${\mathbb R}^n$. Then, according to (iv) of Proposition \ref{prop2.2}  in Section \ref{notation}, we have 
\[
\|u\|_{L^p(K)}\leq c(K)\|u\|_{L^{\frac{np}{n-p}}(K)}\leq c\|\nabla u\|_{L^p({\mathbb R}^n)}, \quad  1\leq p<n.\]   
Hence,   $u\in \dot{H}^1_{p}({\mathbb R}^n)$, $1\leq p<n$ implies 
$u\in H^1_{p,loc}({\mathbb R}^n)$.


Let $u \in \dot{H}^1_{p,0}(\Omega)$ with $1 \leq p < n$. 
Recall that ${\mathcal E}_\Omega$ denotes the zero extension operator defined in Section~\ref{notation}. 
By Lemma~\ref{lemma2.5} in Section \ref{notation}, we have 
${\mathcal E}_\Omega u \in \dot{H}^1_p(\mathbb{R}^n)$ and 
\[
\|{\mathcal E}_\Omega u\|_{\dot{H}^1_p(\mathbb{R}^n)} 
= \|u\|_{\dot{H}^1_{p,0}(\Omega)}.
\]
Hence ${\mathcal E}_\Omega u \in H^1_{p,\mathrm{loc}}(\mathbb{R}^n)$.


By the trace theorem   ${\mathcal E}_\Om u|_{\pa \Om}\in B^{1-\frac{1}{p}}_{p,p}(\pa \Om)$.
On the other hand, since ${\mathcal E}_\Om u=0$ on $\Om^c$, we conclude that ${\mathcal E}_\Om u=0$  on $\pa \Om$, that is, $u|_{\pa \Om}=0$.

 Let $\ep>0$ be given. 
Take  $R>0$ such that
\[
\Big(\int_{\Om \backslash B_{R}}|\nabla  u|^p dx\Big)^{\frac{1}{p}}\leq \frac{\ep}{2}, \, \, \mbox{  that is, } \, \,  \|\nabla  u\|_{L^p ( B_{R}^c ) }\leq \frac{\ep}{2}.
\]
Take cut off function $\eta_R\in C^\infty_0({\mathbb R}^n)$ with $\eta=1$ on $B_{\frac{R}{2}}$ and $\eta=0$ on $B_{R}^c $. Set $u_R=u\eta_R$. 
 Observe that $u_R|_{\pa \Om_R}=0$ a.e. and 
\begin{align*}
\|u_R\|_{L^p(\Om_R)}&\leq c\| u\|_{L^p(\Om_R)}\leq c(R)\|\nabla u\|_{L^p(\Om_R)},\\
\|\nabla u_R\|_{L^p(\Om_R)}&\leq c\|\nabla u\|_{L^p(\Om)}+\frac{c}{R}\|u\|_{L^p(\Om_R)}\leq c\|\nabla u\|_{L^p(\Om_R)}.\end{align*}
Here,   the Poincare's inequality that $\|u\|_{L^p(\Om_R)}\leq c\|\nabla u\|_{L^p(\Om_R)}$ holds since $u|_{\pa \Om}=0$.
Therefore, $u_R\in H^1_{p,0}(\Om_R)$.
It is well known that $C^\infty_0(\Om_R)$ is dense in $H^1_{p,0}(\Om_R)$. Hence, there is $v_R\in C^\infty_0(\Om_R)$ with
\[
\|u_R-v_R\|_{{H}^1_p(\Om_R)}\leq \frac{\ep}{2}, \,\, \mbox{which implies that } \,\, \|\nabla u_R-\nabla v_R\|_{L^p(\Om_R)}\leq \frac{\ep}{2}.
\]
Then, we have
\begin{align*}
\|u-v_R\|_{\dot{H}^1_p({\mathbb R}^n)}&=\|\nabla u-\nabla v_R\|_{L^p(\Om)}\leq \|\nabla u\|_{L^p(\Om_{R}^c)}+\|\nabla u_R-\nabla v_R\|_{L^p(\Om_R )}\leq \epsilon.
\end{align*}
This leads to the conclusion that $C^\infty_0(\Om)$ is dense in $\dot{H}^1_{p,0}(\Om)$.

Also, with (i) of Lemma \ref{lemma2.4} in Section \ref{notation} it holds
\[
\dot{H}^1_{p,0}(\Om)\subseteq \{u\in \dot{H}^1_p(\Om):\ \ u|_{\pa \Om}=0\}.
\]

Let $u\in \dot{H}^1_p(\Om)$ with $u|_{\pa \Om}=0$.  
Observe that \[\int_{{\mathbb R}^n}{\mathcal E}_\Om  u  (x) \partial_{x_l}\phi(x) dx=\int_\Om u(x)\partial_{x_l}\phi(x) dx=-\int_\Om \phi(x) \partial_{x_l} u(x)dx,\] since $u|_{\pa \Om}=0$.
Hence, $\nabla {\mathcal E}_\Om  u\in L^p({\mathbb R}^n)$ with 
 \begin{align*}
\| {\mathcal E}_\Om u\|_{L^{\frac{np}{n-p}}({\mathbb R}^n)}+\|\nabla {\mathcal E}_\Om  u\|_{L^p({\mathbb R}^n)}= \| u\|_{L^{\frac{np}{n-p}}(\Om)}+\|\nabla u\|_{L^p(\Om)}.
 \end{align*}
This implies that
${\mathcal E}_\Om u\in \dot{H}^1_p({\mathbb R}^n)$, that is, $u\in \dot{H}^1_{p,0}(\Om)$.

This completes the proof of (ii).

\medskip

\subsection{Proof of Proposition \ref{Proposition2.7}}
\label{app:oper}

\mbox{} \\
\noindent{\bf Proof of (i).}


 Let $\Om$ be an exterior domain with compact boundary. 

Since $\pa \Om$ is compact set, 
$\partial \Om$ can be covered by the finite number of open balls $\{U_j\}_{k=1}^m,$ where $U_j\cap \pa\Om\neq \emptyset$ are represented by smooth functions  so that there are deffeomorphisms $h_j$ with $h_j(U_j\cap \Om):=V_j^+\subset   B_1^+:=\{x\in {\mathbb R}^n \, | \,  x_n>0,\,  |x|<1\}$, $h_j(U_j\cap \pa\Om)\subset \{(x',0): x'\in {\mathbb R}^{n-1}, \,|x'|<1\}$. 
Without loss of generality, assume that  $\Om^c\subset B_{1}$ and $\Om_{R}\subset \sum_{k=1}^mU_k$  for  $1<R$.
Let $U_0=\overline{\Om_{1}}^c$.

Let  $\{\zeta_j\}_{j=1}^m$ be the partition of unity of $\{U_j\}_{j=1}^m$ so that  $\sum_{j=l}^m\zeta_j=1\mbox{ in }\Om_{R},$ $ \mbox{\rm supp} \, \zeta_j\subset U_j.$
Take a cut-off function $\eta \in C^\infty_0(B_{R})$ with $\eta=1$ on $B_{1}$.
Set $\eta_0=1-\eta$, $\eta_k=\eta \zeta_k$ for $k=1,\cdots, m$.
Then  $\{\eta_j\}_{j=0}^m$ is the partition of unity of $\{U_j\}_{j=0}^m$ with $\sum_{j=0}^m\eta_j=1$ on $\Om$ and $\mbox{\rm supp}\,\eta_j\subset U_j$, $j=0,\cdots,m$.

We define an extension operator $E_\Om:(C^\infty_0(\Om))'\rightarrow (C^\infty_0({\mathbb R}^n))'$ by
\[
E_\Om f(\phi)=f(
R_\Om[\eta_0\phi])+\sum_{j=1}^m f(
R_\Om[\eta_j(\phi-(E_0(\phi\circ h_j^{-1}))\circ h_j])
\]
  for $ \phi\in C^\infty_0({\mathbb R}^n)$ and $ f\in (C^\infty_0(\Om))'$.
Here,  $R_\Om $ is the restriction operator defined in Section \ref{notation} and $E_0$ is  operator   defined by 
\[
E_0\phi(x):=\sum_{k=0}^n a_k \phi(x', -\frac{x_n}{k+1}),  \]
where $ \sum_{k=1}^na_k(-\frac{1}{k+1})^j =1\mbox{ for }j=0,\cdots, m. 
 $
If $f\in L^1_{loc}(\Om)$, then   $E_\Om f $ coincides with
 $E_\Om f=\eta_0 f+\sum_{j=1}^m(E{g}_j)\circ h_j,$
 where
   $g_j=(\eta_j f)\circ h_j^{-1}$, $j=0,\cdots,m$. Here, 
 $E$ is the extension operator from ${\mathbb R}^n_+$ to ${\mathbb R}^n$   defined by
 \begin{align}\label{extension0530}
E{f}(x)=\left\{\begin{array}{l} \vspace{2mm}
f(x), \quad x_n\geq 0,\\
\sum_{k=0}^m(-(k+1)a_k) f(x',-(k+1)x_n)  
:=E_0^*f, \quad x_n<0.
\end{array}\right.
\end{align}
 It is obvious that 
 ${E}_\Om$ is bounded from $ L^p(\Om) $ to $ L^p({\mathbb R}^n)$
 with
\begin{align*}
\|{ E}_\Om f\|_{L^p({\mathbb R}^n)}&\leq c\|f\|_{L^p(\Om)}, \quad  1\leq p\leq \infty.
\end{align*}
Note that $ \dot H^1_p (\Om) = \{ f \in L^{\frac{np}{n-p}} (\Om) \, | \, \nabla f \in [L^p (\Om)]^n \}$ for $ 1 \leq p < n$ from  (ii) of Lemma \ref{lemma2.4}  in Section \ref{notation}.  
Then,   for $   f \in \dot{H}^1_p (\Om), \,\, 1 \leq p  <n$,  we have
\begin{align}\label{0526-3}
\|f\|_{L^p(\Om_R)}\leq   c(R) \|f\|_{L^{\frac{np}{n -p}} (\Om_R)}\leq   c(R)\|f\|_{L^{\frac{np}{n -p}} (\Om)}\leq  
c(R) \| f\|_{\dot H^1_p(\Om)}.
\end{align}
From \eqref{0526-3},  we have 
\[
\|\nabla (E{g}_j)\circ h_j\|_{L^p({\mathbb R}^n)}\leq c\|\nabla f\|_{L^p(U_j\cap \Om)}+c\|f\|_{L^p(\Om_R)}\leq c\| f\|_{\dot H^1_p(\Om)}. \]
Similarly, since $\eta_0f\in C^\infty_0({\mathbb R}^n)$ with $\mbox{\rm supp} \, \eta_0 f\subset U_0$,
\[
\|\nabla(\eta_0 f)\|_{L^p({\mathbb R}^n)}\leq c\|\nabla  f\|_{L^p(U_0)}+c\|f\|_{L^p(\Om_R)}\leq c\| f\|_{\dot H^1_p(\Om)}, \quad 1 \leq  p<n.\]
This leads to the  estimate
\[
\|   E_\Om f\|_{\dot H^1_p({\mathbb R}^n)}\leq c\|   f\|_{\dot H^1_p(\Om)},  \quad 1 \leq p<n.\]
Hence,
${E}_\Om$ is bounded from $\dot{H}^1_p(\Om)$ to $\dot{H}^1_p({\mathbb R}^n)$, $1\leq p<n$.

This completes the proof of (i).

\medskip
\noindent{\bf Proof of (ii).}

We define an operator  ${\mathcal  R}_\Om: f\in (C^\infty_0(\Om))'\rightarrow f\in (C^\infty_0({\mathbb R}^n))' $  by 
\[
{\mathcal R}_\Om f(\phi)=f(R_\Om(\eta_0\phi))+\sum_{j=1}^mf(R_\Om[  \eta_j(\phi-(E_0^*(\phi\circ h_j^{-1}) )\circ h_j  ])
\]
 for $ \phi\in C^\infty_0(\Om)$ and $ f\in (C^\infty_0(\Om))'$. 
If $f\in L^1_{loc}({\mathbb R}^n)$, then ${\mathcal R}_\Om f$ coincides with 
\[
{\mathcal R}_\Om f=\eta_0f+\sum_{j=1}^m(R[g_j-E_0g_j])\circ h_j,
\]
where
   $g_j=(\eta_jf)\circ h_j^{-1}$.

It is obvious that  ${\mathcal R}_\Om$ 
is bounded from $ L^p({\mathbb R}^n) $ to $  L^p(\Om)$. 
 Observe that $\mbox{\rm supp}(R[g_j-E_0g_j])\circ h_j\subset h_j^{-1}({V}_j)$,    $(R[g_j-E_0g_j])\circ h_j|_{\pa \Om}=0$ and  for $ 1 \leq p < n$, 
\begin{align*}
\|  (R[g_j-E_0g_j])\circ h_j\|_{L^{\frac{np}{n-p}} (U_j)} 
& \leq c \| f\|_{L^{\frac{np}{n-p}} ({\mathbb R}^n)}  \leq c \|\nabla f\|_{L^p ({\mathbb R}^n)},\\
\|\nabla (R[g_j-E_0g_j])\circ h_j\|_{L^p(U_j)} & \leq c \Big( \|\nabla f\|_{L^p(\tilde{U}_j)}+\|f\|_{L^p(\Om_R)} \Big) \\
& \leq c \Big(\|\nabla f\|_{L^p({\mathbb R}^n )}+\|f\|_{L^{\frac{np}{n-p}} ({\mathbb R}^n)} \Big)\\
& \leq c\|\nabla f\|_{L^p({\mathbb R}^n)}.
 \end{align*}
On the other hand, $\eta_0f\in C^\infty_0(U_0)$ with 
\begin{align*}
\begin{split}
\| \eta_0 f\|_{L^{\frac{np}{n-p}} (U_0)}& \leq c\|  f\|_{L^{\frac{np}{n-p}} ({\mathbb R}^n) } \leq c\|\nabla f\|_{L^p({\mathbb R}^n)},\\
\|\nabla(\eta_0 f)\|_{L^p(U_0)} & \leq c \Big( \|\nabla  f\|_{L^p(U_0)}+\|f\|_{L^p(\Om_R)} \Big) \leq c\|\nabla f\|_{L^p({\mathbb R}^n)}.
\end{split} 1  <  p<n.
\end{align*}
This leads to the  estimate
\[
\|  {\mathcal  R}_\Om f\|_{\dot H^1_p (\Om)} \leq c\Big(\|\nabla {\mathcal  R}_\Om f\|_{L^p(\Om)} + \| {\mathcal  R}_\Om f\|_{L^{\frac{np}{n-p}} (\Om)} \Big)\leq c\|\nabla  f\|_{L^p({\mathbb R}^n)},  \quad 1 \leq p<n.\]
Hence,  ${\mathcal R}_\Om $ is bounded from $\dot{H}^1_p({\mathbb R}^n)$ to $\dot{H}^1_{p,0}(\Om)$ for $1\leq p<n$.

This completes the proof of (ii).

\medskip

\subsection{Proof of Proposition \ref{Proposition2.8}}
\label{app:interp}

\mbox{} \\
\noindent{\bf Proof of (i).}


Let $f\in \dot{B}^\al_{p,q}(\Om)$, $1<p<n$, $1\leq q\leq \infty$. 
Let $F\in \dot{B}^\al_{p,q}({\mathbb R}^n)$ so that $R_\Om F=f$.  
 According to Lemma \ref{lemma2.3} in Section \ref{notation},  $R_\Om $ is bounded from $\{L^p({\mathbb R}^n), \dot{H}^1_p({\mathbb R}^n)\}$ to  $\{L^p(\Om), \dot{H}^1_p(\Om)\}$.
 Apply  the real  interpolation theorems to  $R_\Om $, then  
   we have  
\begin{align*}
\|f\|_{(L^p(\Om), \dot{H}^1_p(\Om))_{\al,q}}&=\|R_\Om F\|_{(L^p(\Om), \dot{H}^1_p(\Om))_{\al,q}}
\leq c\|F\|_{(L^p({\mathbb R}^n), \dot{H}^1_p({\mathbb R}^n))_{\al,q}}.
\end{align*}
Since $(L^p({\mathbb R}^n), \dot{H}^1_p({\mathbb R}^n))_{\al,q}=\dot{B}^\al_{p,q}({\mathbb R}^n)$(see (vi) of Proposition \ref{prop2.2}  in Section \ref{notation}),  
we have \[
 \|F\|_{(L^p({\mathbb R}^n), \dot{H}^1_p({\mathbb R}^n))_{\al,q}}=\|F\|_{\dot{B}^\al_{p,q}({\mathbb R}^n)}.\]
Combining the above two estimates, we have
 \[
\|f\|_{(L^p(\Om), \dot{H}^1_p(\Om))_{\al,q}}\leq c\|F\|_{\dot{B}^\al_{p,q}({\mathbb R}^n)}.
\]
Taking  infimum over all $F\in \dot{B}^\al_{p,q}({\mathbb R}^n) $ with $F|_\Om=f$,  we have
\[
\|f\|_{(L^p(\Om), \dot{H}^1_p(\Om))_{\al,q}}\leq c\|f\|_{\dot{B}^\al_{p,q}(\Om)}.
\]

 According to (i) of Proposition \ref{Proposition2.7}  in Section \ref{notation}, 
 there is bounded operator $E_\Om$
  from $\{L^p(\Om),\dot{H}^1_p(\Om)\}$\\
   to $\{L^p({\mathbb R}^n),\dot{H}^1_p({\mathbb R}^n)\}$ for $1<p<n$. Observe that $R_\Om E_\Om u=u$  for any $u\in C^m(\overline{\Om})$.
 Since $C^m(\overline{\Om})$ is dense in $\dot{B}^\al_{p,q}(\Om)$ for $1<p<\infty, 1\leq q\leq \infty$,
 $R_\Om E_\Om u=u$  for any $u\in \dot{B}^\al_{p,q}(\Om)$.
 Apply  the interpolation theorems to  $E_\Om$, then  
   we have  
\begin{align*}
\|f\|_{\dot{B}^\al_{p,q}(\Om)}=\|R_\Om E_\Om f\|_{\dot{B}^\al_{p,q}(\Om)}&\leq \|E_\Om f\|_{\dot{B}^\al_{p,q}({\mathbb R}^n)}\\
&\approx\|E_\Om f\|_{(L^p({\mathbb R}^n), \dot{H}^1_p({\mathbb R}^n))_{\al,q}}\leq c\|f\|_{(L^p(\Om), \dot{H}^1_p(\Om))_{\al,q}}.
\end{align*}
Thereore, we conclude that 
\[
(L^p(\Om), \dot{H}^1_p(\Om))_{\al,q}=\dot{B}^\al_{p,q}(\Om), \quad  1<p<n, \quad 1 \leq q \leq  \infty.
\]

This completes the proof of (i).


\medskip
\noindent{\bf Proof of (ii).}


Let $u\in \dot{B}^\al_{p,q,0}(\Om)$ for $1<p<n, 1\leq q<\infty, 0<\al<1$.

Let ${\mathcal E}_\Om$ be the zero exrension operator. Then,  ${\mathcal E}_\Om u=u$ and 
\begin{align*} \|{\mathcal E}_\Om u \|_{\dot{B}^\al_{p,q}({\mathbb R}^n)}=  \|u \|_{\dot{B}^\al_{p,q,0}(\Om) }\quad \mbox{ for } \quad u\in \dot{B}^\al_{p,q,0}(\Om).\end{align*}
Recall that ${\mathcal E}_\Om$ is bounded from $\{L^p(\Om), \dot{H}^1_{p,0}(\Om)\}$ to $\{L^p({\mathbb R}^n), \dot{H}^1_{p}({\mathbb R}^n)\}$(see Lemma \ref{lemma2.5}  in Section \ref{notation}).
By the real interpolation theorem, we have 
\[
\|{\mathcal E}_\Om u\|_{(L^p({\mathbb R}^n), \dot{H}^1_{p}({\mathbb R}^n))_{\al, q}}\leq c\|u\|_{(L^p(\Om), \dot{H}^1_{p,0}(\Om))_{\al, q}}.
\]
Since $(L^p({\mathbb R}^n), \dot{H}^1_{p}({\mathbb R}^n))_{\al, q}=\dot{B}^\al_{p,q}({\mathbb R}^n)$, for $1<p<\frac{n}{\al}$,
\[
\|{\mathcal E}_\Om u\|_{\dot{B}^\al_{p,q}({\mathbb R}^n)}\leq c\|u\|_{(L^p(\Om), \dot{H}^1_{p,0}(\Om))_{\al, q}}.
\] 
Combining the above two estimates,  we have
\[
\|u \|_{\dot{B}^\al_{p,q,0}(\Om)}\leq c\|u\|_{(L^p(\Om), \dot{H}^1_{p,0}(\Om))_{\al, q}}.
\]

Observe that  ${\mathcal R}_\Om {\mathcal E}_\Om u =u$ for any $u\in C^\infty_0(\Om)$.
Since $C^\infty_0(\Om)$ is dense in $\{L^p(\Om), \dot{H}^1_{p,0}(\Om)\}$ for $1<p<n$,  by the real interpolation theorem ${\mathcal R}_\Om {\mathcal E}_\Om f =f$ for any $f\in (L^p(\Om), \dot{H}^1_{p,0}(\Om))_{\al, q}$ for $ 1<p<n, 1\leq q<\infty, \al\in (0,1)$. Hence, we have 
\begin{align*}
\|u\|_{(L^p(\Om), \dot{H}^1_{p,0}(\Om))_{\al,q}}=\|{\mathcal R}_\Om {\mathcal E}_\Om u\|_{(L^p(\Om), \dot{H}^1_{p,0}(\Om))_{\al,q}}&\leq c\|{\mathcal E}_\Om u\|_{(L^p({\mathbb R}^n), \dot{H}^1_p({\mathbb R}^n))_{\al,q}}\\
&\approx \|{\mathcal E}_\Om u \|_{\dot{B}^\al_{p,q}({\mathbb R}^n)}=\|u\|_{\dot{B}^\al_{p,q,0}(\Om)}.
\end{align*}

Therefore,  we conclude that
\[
(L^p(\Om), \dot{H}^1_{p,0}(\Om))_{\al,q}=\dot{B}^\al_{p,q,0}(\Om).
\]

This completes the proof of (ii).

\section{\bf Proof of Lemma \ref{lemma3.1}  }
\label{appendix2}
 
 Let ${\mathcal B}_\Om $ be the Bogovski operator defined by
\begin{align*}
&\mbox{\rm div} \, {\mathcal B}_\Om f=f  \, \quad \mbox{ in } \quad \Om,\quad 
{\mathcal B}_\Om f|_{\pa \Om}=0,\quad  
 \|\nabla {\mathcal B}_\Om f\|_{L^p(\Om)}\leq c\|f\|_{L^p(\Om)}.
\end{align*}
(See  section III.3 of Chapter 3 in \cite{galdi}. For the  bounded domain cases it is  required the condition  $\int_\Om \mbox{\rm div} f dx=0$. Such condition is not necessary for the unbounded domains.)

 \medskip
\noindent{\bf Proof of (i).}

 It is well known that $C^\infty_{0,\si}(\Om)$ is dense in $L^p_\si(\Om)$ for $1<p<\infty$.
   Now,  we show that $C^\infty_{0,\si} (\Om)$ is dense in $\dot{H}^1_{p,0,\si}(\Om)$ for $1<p<n$.
We modify the proof in Appendix \ref{appendix1}, where it was shown that $C^\infty_0(\Om)$ is dense in $\dot{H}^1_{p,0}(\Om)$ for $1<p<n$.

Let $u\in \dot{H}^1_{p,0,\si}(\Om)$, $1<p<n$. 
Let $\ep>0$ be given.  
Take $R>0$  with  $\|\nabla u\|_{L^p(B_{\frac{R}{2}}^c)}<\frac{\ep}{2}$.
Take cut off function $\eta\in C^\infty_0({\mathbb R}^n)$ with $\eta=1$ on $B_{\frac{R}{2}}$ and $\eta=0$ on $B_{R}^c$.
Let $\Om_R=\Om \cap B_R$.  Then   $u_R:=u\eta_R \in [{H}^1_{p,0}(\Om_R)]^n$ with 
\begin{align*}
\|u_R\|_{L^p(\Om_R)}+\|\nabla u_R\|_{L^p(\Om_R)}\leq   c(R)\|\nabla u\|_{L^p(\Om_R)}.\end{align*}
Observe that $\mbox{\rm div}{u_R}=u\cdot \nabla \eta_R$.  Set $v_R=u_R-{\mathcal B}_\Om (u\cdot \nabla \eta_R)$. Then $v_R\in H^1_{p,0,\si}(\Om_R)$
with
\[
\|v_R\|_{L^p(\Om_R)}+\|\nabla v_R\|_{L^p(\Om_R)}\leq   c\|\nabla u\|_{L^p(\Om_R)}.
\]
It is well known that $C^\infty_{0,\si}(\Om_R)$ is dense in $H^1_{p,0,\si}(\Om_R)$. Hence there is $w_R\in C^\infty_{0,\si}(\Om_R)$ with
\[
\|v_R-w_R\|_{{H}^1_p(\Om_R)}\leq \frac{\ep}{2}, \,\, \mbox{ this implies that } \,\, \|\nabla v_R-\nabla w_R\|_{L^p(\Om_R)}\leq \frac{\ep}{2}.
\]
Then,  we have
\begin{align*}
\| u-\ w_R\|_{L^{\frac{np}{n-p}}(\Om)}\leq \|\nabla u-\nabla w_R\|_{L^p(\Om)}\leq \|\nabla u\|_{L^p(\Om_{\frac{R}{2}}^c)}+\|\nabla u_R-\nabla v_R\|_{L^p(\Om{\frac{R}{2}})}\leq \epsilon.
\end{align*}
 Recall\eqref{eq3.3}   in Section \ref{solenoidal} that
 $
\dot{H}^1_{p,0,\si}=\{u\in [L^{\frac{np}{n-p}}(\Om)]^n\, | \,\nabla u\in [L^p(\Om)]^{n^2}, \ \mbox{\rm div }u=0\mbox{ in }\Om, \ u|_{\pa \Om}=0\}, 1<p<n.
$
This leads to the conclusion that $C^\infty_{0,\si}(\Om)$ is dense in $\dot{H}^1_{p,0}(\Om)$.

This completes the proof of (i).
  
\medskip
\noindent{\bf Proof of (ii).}

We define  the operator    ${\bf E}_{\Om}$ 
by
\[{ \bf E}_{\Om}u=({ E}_\Om u_1,\cdots, {E}_\Om u_n)\mbox{ for }u=(u_1,\cdots, u_n),
\]
where 
$E_\Omega$ was introduced in the proof of (i) in Appendix~{app:oper}.

From the property of $E_\Om$, it is obvious that for $1<p<n, 1\leq q<\infty$,  
\[{\bf E}_\Om\mbox{ is also bounded from }\{L^p_\si(\Om), \dot{H}^1_{p,\si}(\Om)\}\mbox{ to }\{[L^p({\mathbb R}^n)]^n, [\dot{H}^1_{p}({\mathbb R}^n)]^n\}.
\]

Define a projection  operator $P_0 $ by $P_0 f=f-\nabla N*\mbox{\rm div}f$ for  $f=(f_1,\cdots, f_n)$, where $N$ is Newtonian potential. 
 Then $P_0$ is bounded  from $ \{[L^p({\mathbb R}^n)]^n,[\dot{H}^1_p({\mathbb R}^n)]^n\}$ to 
$\{L^p_\si({\mathbb R}^n), \dot{H}^1_{p,\si}({\mathbb R}^n)\}$  
for $1<p<n, 1\leq q<\infty$.
Define $ E_{\Om,\si}=P_0{\bf E}_\Om$. Then $E_{\Om,\si}$ is bounded from  $\{\dot H^0_{p, \si } (\Om), \dot{H}^1_{p,\si}(\Om)\}$ to $\{L^p_\si({\mathbb R}^n), \dot{H}^1_{p,\si}({\mathbb R}^n)\}$ for $1<p<n, 1\leq q<\infty$.

This completes the proof of (ii).

\medskip
\noindent{\bf Proof of (iii).}

Recall that  $E_{\Om,\si}$  is bounded from $\{ \dot H^0_{p, \si} (\Om),\dot{H}^1_{p,\si}(\Om)\}$ to $\{L^p_\si({\mathbb R}^n),\dot{H}^1_{p,\si}({\mathbb R}^n)\}$ for $1<p<n$ 
 and that   
  $R_\Om $ is  bounded from   
$\{L^p_\si({\mathbb R}^n), \dot{H}^s_{p,\si}({\mathbb R}^n)
\}$ to $  \{\dot H^0_{p,\si} (\Om), \dot{H}^s_{p,\si}(\Om)
\}$.
Observe that $R_\Om E_{\Om,\si} u=u$  for any $u\in C^\infty_{0,\si}(\overline{\Om})$.

Now we follow the same line of argument as in the proof of (i) in Appendix~\ref{app:interp}. 
Then we obtain the following interpolation 
\[
(\dot H^0_{p,\si} (\Om), \dot{H}^1_{p,\si}(\Om))_{\al,q}=\dot{B}^\al_{p,q,\si}(\Om), \quad  1<p<n, 1 \leq q \leq  \infty.
\]

This completes the proof of (iii).

\medskip
\noindent{\bf Proof of (iv).}

We define  the operator ${{ \Re}}_\Om$ by 
\[
{\Re}_\Om u:=({\mathcal R}_\Om u_1,\cdots, {\mathcal R}_\Om u_n)\mbox{ for }u=(u_1,\cdots, u_n),
\]
where 
 ${\mathcal R}_\Omega$ denotes the scalar operator introduced in the proof of (ii) in Appendix~\ref{app:oper}.

By the property of ${\mathcal R}_\Om$,   it is obvious that  for $1<p<n, 1\leq q<\infty$
\[
{\Re}_\Om \mbox{ is also bounded   from } \{L^p_\si({\mathbb R}^n), \dot{H}^1_{p,\si}({\mathbb R}^n)\} \mbox{ to } \{[L^p(\Om)]^n, [\dot{H}^1_{p,0}(\Om)]^n\}.\]

Let ${\mathcal B}_\Om$ be the Bogovski operator defined at the beginning of this section. 
Define a projection operator $P_\Om :  \{[L^p(\Om)]^n, [\dot{H}^1_{p,0}(\Om)]^n\}\rightarrow  \{L^p_\si(\Om), \dot{H}^1_{p,0,\si}(\Om)\}$ by $
P_\Om f=f-{\mathcal B}_\Om f.$
Now  define ${\mathcal R}_{\Om,\si}=P_\Om{\Re}_\Om$. Then
${\mathcal R}_{\Om,\si}$ is a bounded linear operator from  $\{L^p_\si({\mathbb R}^n), \dot{H}^1_{p,\si}({\mathbb R}^n)\}$ to  $ \{L^p_\si({\mathbb R}^n), \dot{H}^1_{p,0,\si}({\mathbb R}^n)\}$  for $1<p<n, 1\leq q<\infty$.

This completes the proof of (iv).
  
\medskip
\noindent{\bf Proof of (v).}
 
 Recall that  ${\mathcal E}_{\Om}$  is    bounded from $\{L^p_\si(\Om), \dot{H}^s_{p,0,\si}(\Om)
 \}$ to $\{L^p_\si({\mathbb R}^n), \dot{H}^1_{p,0,\si}({\mathbb R}^n)
 \}$ for $1<p<n, 1\leq q<\infty$ 
and that
${\mathcal R}_{\Om,\si}$ is bounded  from  $\{L^p_\si({\mathbb R}^n), \dot{H}^1_{p,\si}({\mathbb R}^n)\}$ to  $ \{L^p_\si({\mathbb R}^n), \dot{H}^1_{p,0,\si}({\mathbb R}^n)\}$  for $1<p<n, 1\leq q<\infty$.
Observe that   ${\mathcal R}_{\Om,\si} {\mathcal E}_\Om f=f$  for all $f\in C^\infty_{0,\si}(\Om)$.

Again we follow the same line of argument as in (ii) in Appendix~\ref{app:interp}. 
It follows that the following interpolation property holds:
\[
(L^p_\si(\Om), \dot{H}^1_{p,0,\si}(\Om))_{\al,q}=\dot{B}^\al_{p,q,0,\si}(\Om), \quad  1<p<n, 1 \leq q \leq  \infty.
\]

This completes the proof of (v).

{\bf Acknowledgement}
Chang (RS-2023-00244630) and Jin (RS-2023-00280597)
are supported by the Basic Research Program
through the National Research Foundation of Korea
funded by Ministry of Science and ICT.

%
%
%

\end{document}